\documentclass{article}

\usepackage[left=2.5cm, right=2.5cm, top=2.5cm, bottom=3cm]{geometry}

\usepackage{graphicx}
\usepackage{caption}
\usepackage{subcaption}
\usepackage{makeidx}
\usepackage{amsfonts,dsfont}
\usepackage{amssymb}
\usepackage{amsthm}
\usepackage{amsmath}
\usepackage{textcomp}
\usepackage[singlespacing]{setspace}
\usepackage[normalem]{ulem}
\usepackage{lineno}
\usepackage{color} 
\parindent=0mm
\usepackage{bm,stackrel}
\usepackage{textpos}
\usepackage{anysize}

\usepackage{tikz,pgf}
\usetikzlibrary{arrows}
\usetikzlibrary{snakes,automata,patterns}
\usepackage
{circuitikz} 

\usepackage[ruled,linesnumbered]{algorithm2e} 
\usepackage{setspace}

\RequirePackage{parskip}
\setlength{\parskip}{1em}
\setlength{\parindent}{0pt}

\usepackage{chngcntr}
\counterwithin{figure}{section}
\counterwithin{equation}{section}

\usepackage{float}

\usepackage{lipsum}

\usepackage{pgf}
\usepackage{tikz}
\usetikzlibrary{arrows,automata}
\usepackage{pgfplots}
\pgfplotsset{width=8cm,compat=1.5}

\newcounter{defcounter}
\setcounter{defcounter}{0}

\newtheoremstyle{miestilo}{\baselineskip}{3pt}{\itshape}{}{\bfseries}{}{.5em}{}
\newtheoremstyle{miobs}{\baselineskip}{3pt}{}{}{\bfseries}{}{.5em}{}
\theoremstyle{miestilo}
\newtheorem{defn}{Definition}

\newtheorem{thm}[defn]{Theorem}
\newtheorem{lemma}[defn]{Lemma}
\newtheorem{cor}[defn]{Corollary}

\theoremstyle{miobs}

\newcommand{\R}{\mathbb{R}}

\newcommand{\act}{\mathrm{act}}
\newcommand{\pos}{\mathrm{pos}}

\newcommand{\Cost}{\mathrm{Cost}}
\newcommand{\Coef}{\mathrm{Coef}}

\newcommand{\Cin}{\mathrm{in}}
\newcommand{\Cout}{\mathrm{out}}

\newcommand{\STC}{\mathrm{STC}}

\usepackage{tikz}
\usetikzlibrary{shapes.geometric}
\usepackage[numbers]{natbib} 

\allowdisplaybreaks
\usepackage{pdflscape}

\usepackage{listings}
\usepackage{xcolor}
\usepackage{resizegather}  
\pgfplotsset{compat=newest}
\usepgfplotslibrary{statistics}

\definecolor{RYB1}{rgb}{0.36, 0.54, 0.66}
\definecolor{RYB2}{rgb}{0.82, 0.1, 0.26}
\definecolor{RYB3}{rgb}{0.0, 0.0, 1.0}
\definecolor{RYB4}{RGB}{230,97,1}
\definecolor{RYB5}{RGB}{200,150,250}
\pgfplotscreateplotcyclelist{colorbrewer-RYB}{
	{RYB1!50!black,fill=RYB1},
	{RYB2!50!black,fill=RYB2},
	{RYB3!50!black,fill=RYB3},
	{RYB4!50!black,fill=RYB4},
	{RYB5!50!black,fill=RYB5}} 

%

\makeatletter \newcommand{\pgfplotsdrawaxis}{\pgfplots@draw@axis} \makeatother
\pgfplotsset{axis line on top/.style={
		axis line style=transparent,
		ticklabel style=transparent,
		tick style=transparent,
		axis on top=false,
		after end axis/.append code={
			\pgfplotsset{axis line style=opaque,
				ticklabel style=opaque,
				tick style=opaque,
				grid=none}
			\pgfplotsdrawaxis}
	}
}

\definecolor{codegreen}{rgb}{0,0.6,0}
\definecolor{codegray}{rgb}{0.5,0.5,0.5}
\definecolor{codepurple}{rgb}{0.58,0,0.82}
\definecolor{backcolour}{rgb}{0.95,0.95,0.92}

\lstdefinestyle{mystyle}{
	backgroundcolor=\color{backcolour},   
	commentstyle=\color{codegreen},
	keywordstyle=\color{magenta},
	numberstyle=\tiny\color{codegray},
	stringstyle=\color{codepurple},
	basicstyle=\ttfamily\footnotesize,
	breakatwhitespace=false,         
	breaklines=true,                 
	captionpos=b,                    
	keepspaces=true,                 
	numbers=left,                    
	numbersep=5pt,                  
	showspaces=false,                
	showstringspaces=false,
	showtabs=false,                  
	tabsize=2
}

\lstset{style=mystyle}

\lstdefinelanguage{Julia}%
{morekeywords={abstract,break,case,catch,const,continue,do,else,elseif,%
		end,export,false,for,function,immutable,import,importall,if,in,%
		macro,module,otherwise,quote,return,switch,true,try,type,typealias,%
		using,while},%
	sensitive=true,%
	alsoother={\$},%
	morecomment=[l]\#,%
	morecomment=[n]{\#=}{=\#},%
	morestring=[s]{"}{"},%
	morestring=[m]{'}{'},%
}[keywords,comments,strings]%


\title{Optimal design of exchange water networks with control inputs in Eco-Industrial Parks}

\author{
	Didier Aussel \thanks{ Laboratoire PROMES, UPR CNRS 8521, Universit\'e de Perpignan - Via Domitia, 66100 Perpignan, France {\tt aussel@univ-perp.fr} (Corresponding Author) }
	\and 
	Kien Cao Van \thanks{ Laboratoire PROMES, UPR CNRS 8521, Universit\'e de Perpignan - Via Domitia, 66100 Perpignan, France {\tt kien.van@promes.cnrs.fr}  }
	\and
	David Salas \thanks{Instituto de Ciencias de la Ingenier\'ia, Universidad de O'Higgins, Rancagua, Chile {\tt david.salas@uoh.cl}}
}

\newlength\myindent
\setlength\myindent{2em}

\providecommand{\keywords}[1]{\textbf{Keywords} #1}

\providecommand{\AMS}[1]{\textbf{Mathematics Subject Classification (2020)} #1}

\definecolor{forestgreenweb}{rgb}{0.13, 0.55, 0.13}
\newcommand{\DS}[1]{{{#1}}} 
\newcommand{\DA}[1]{{{#1}}} 
\newcommand{\KC}[1]{{{#1}}} 
\begin{document}
	\maketitle
	\begin{abstract}
		Industrial water conservation is an important adaptation to preserve the environment. Eco-Industrial Parks (EIPs) have been designed to encourage the establishment of water exchange networks between enterprises in order to minimize freshwater consumption and wastewater discharge by maximizing wastewater reuse. \KC{This control-input model presents a mathematical programming formulation for designing and optimizing industrial water networks in EIPs, formulating and solving it as a Single-Leader Multi-Follower (SLMF) game problem}. Enterprises (followers) aim to minimize their operating costs by reusing wastewater from other enterprises, while the designer (leader) aims to minimize the consumption of natural resources within the ecopark. Moreover, when participating \KC{in} the ecopark, enterprises can control all their input fluxes and the designer guarantees a minimal relative improvement in comparison with the stand-alone operation of each enterprise. The SLMF game is transformed into a single mixed-integer optimization problem. The obtained results are compared with the results of the blind-input model \cite{SALAS2020107053}.
	\end{abstract}
	
	\keywords{\noindent  Single-Leader-Multi-Follower game; Eco-industrial park; mixed integer programming; water networks }
	
	\AMS{ 90C26; 90C11; 90C90}

	
	\section{Introduction}
	The development of industrialization and urbanization along with other human activities on the environment in many countries around the world makes the earth heavily polluted. Therefore, decision makers need to have practical policies and actions to prevent these risks. Facing these issues, numerous approaches/concepts have been proposed,  in the last few decades, for protecting the global environment while increasing the economic utilities based on the concept of \textit{sustainable development}. Some concepts linked to sustainable development such as \textit{Industrial Ecology} (IE) have emerged \cite{Boix2015}. IE was first introduced in 1989 by Frosch
	and Gallopoulos \cite{Frosch1989}. They proposed that resource consumption and waste generation are minimized by allowing the waste materials from one industry to serve as raw material for another. This idea is directly related to another concept, namely \textit{Industrial Symbiosis} (IS), which involves ``separate industries in a collective approach to competitive advantage involving physical exchange of materials, energy, water and by-products" \cite{Chertow2000}. Industrial symbiosis can be realized through single-industry dominated complexes including chemicals and petrochemicals, pulp and paper,   waste management which includes material reuse and recycle, etc. As a feature of IS, the geographical proximity between participating industries is essential because the transportation of waste materials is expensive. The most widespread manifestations of these kinds of IS  are \textit{Eco-Industrial Parks} (EIPs).
	
	The objective of EIPs is to reduce the production cost of each participating enterprise and the environmental impact of industrial production  while the production level of each industry is maintained. This involves reducing the consumption of energy and/or raw materials (water, energy-steam, etc) of a group of companies located in the same industrial park, or designing/creating new industrial parks incorporating these aspects. This is achieved by reusing the waste from one industrial process as a utility from another process, either in raw form if the ``contamination" is low enough or via regeneration facilities. However, to convince companies to take part in an EIP, it is essential to make sure that each participant gains in competitiveness (reduction in production costs in most cases). Since these advantages depend on the EIP configuration, proper planning and design are critical. However, the system methods for designing the optimal configuration of EIP are lacking and the models of water exchange network used to model Eco-Industrial Parks in the literature are limited in some respects \cite{SALAS2020107053, Boix2015, Chertow2000, Ramos2016}.

	The design and optimization of water-exchange networks in EIPs are complex problems due to their actual sizes, their range of physical constraints, to be taken into account.
	In the literature, there are two main approaches for designing and optimizing water-exchange networks in the EIP: multi-objective optimization (MOO) on one hand and non-cooperative game theory on the other hand. The MOO perspective is based on coalition cooperative games in which enterprises make binding agreements to coordinate their strategies and share their information. The solution of such an approach is called \textit{Pareto optimal}. A solution is said to be Pareto optimal if no one can be made better off without making someone worse off \cite{mccain2010game}. After solving the multi-objective optimization problem the designer selects one of Pareto optimal solutions considering an auxiliary criterion, usually the distance to a utopia point. The basic obstacle to EIP is the need for cooperation between enterprises with different interests, in particular by sharing data on their production processes. This kind of cooperation between enterprises can only be implemented when there is trust between partners. Note that such an issue usually does not exist in designing and optimizing of exchange networks in EIPs \cite{SchwarzSteininger1997}. Therefore, due to the non-cooperative context, the different enterprises may deviate from the selection of the designer since they may improve their benefits by unilaterally change their operation. We refer the reader to \cite{Boix2015, BoixMontastruc2011, Montastruc2013}, for the survey on multi-objective optimization approach.
	
	Another suitable approach for designing and \KC{optimizing water-exchange networks} in EIPs is the non-cooperative game theory approach. A game is non-cooperative if the participants do not make binding commitments to coordinate their strategies. The theory of non-cooperative games corresponds to a mathematical analysis of strategy and conflict, in which a player success in making choices depends on the choice of others. In fact, an EIP can be considered as a collection of non-cooperative agents by introducing an EIP impartial authority whose role is to collect all necessary data, in a confidential way, to design the EIP. More precisely, enterprises optimize their operating costs while the EIP authority aims to minimize resource consumption. This kind of problem can be modeled as Single-Leader-Multi-Follower (SLMF) game with leader-follower strategy. The upper-level decision-maker (leader) is the EIP authority and the enterprises are the lower-level decision-makers (followers). The EIP authority makes his decision first by anticipating the responses of enterprises. Based on the EIP authority’ decisions, all enterprises compete with each other in \KC{a parametric generalized Nash game} at the lower-level with the strategies of the EIP authority as exogenous parameters. It's worth mentioning that, at the lower level, enterprises play a generalized Nash equilibrium between them, so enterprises involved would be able to keep confidential data, without the need to share them with the other enterprises of the park. In the context of non-cooperative games, an optimal solution for EIP design can be achieved and proposed by obtaining a generalized Nash equilibrium \KC{(GNEP)}. Due to the Nash equilibrium concept, no enterprise can unilaterally deviate in order to improve his payoff by choosing a different strategy. We refer the reader to \cite{Ichiishi1983} for a primer in non-cooperative games, to \cite{FacchineiKazow2010} for a survey of generalized Nash equilibrium problems. For SLMF games, we refer to \cite{MingFukushima2015,Aussel_Svensson_JNCA18,Aussel_Svensson_JOTA19,Aussel_Svensson2020} and the references therein. Figure \ref{fig:SLMF-Game} shows the general scheme of such a model.

	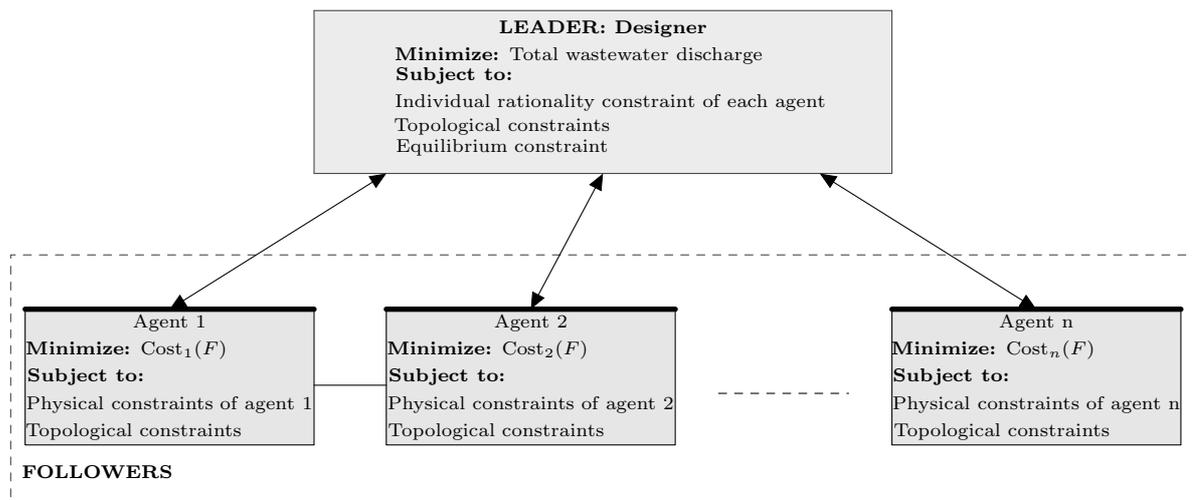
\begin{figure}[H] 
		\centering
		\definecolor{zzttqq}{rgb}{0.26666666666666666,0.26666666666666666,0.26666666666666666}
		\begin{tikzpicture}[line cap=round,line join=round,>=triangle 45,x=1.9cm,y=1.8cm]
			\fill[line width=2.pt,color=zzttqq,fill=zzttqq,fill opacity=0.10000000149011612] (-1.5,5.2) -- (-1.5,4.) -- (2.5,4.) -- (2.5,5.2) -- cycle;
			\fill[fill=black,fill opacity=0.10000000149011612] (-3.5,3.) -- (-3.5,2.) -- (-1.5,2.) -- (-1.5,3) -- cycle;
			\fill[fill=black,fill opacity=0.10000000149011612] (-1.,3.) -- (-1.,2.) -- (1.,2.) -- (1.,3.) -- cycle;
			\fill[fill=black,fill opacity=0.10000000149011612] (2.5,3.) -- (2.5,2.) -- (4.5,2.) -- (4.5,3.) -- cycle;
			\draw[dashed,color=zzttqq] (-3.6,3.4) -- (-3.6,1.56) -- (4.59,1.56) -- (4.59,3.4) -- cycle;
			
			\draw [color=zzttqq] (-1.5,5.2)-- (-1.5,4.);
			\draw [color=zzttqq] (-1.5,4.)-- (2.5,4.);
			\draw [color=zzttqq] (2.5,4.)-- (2.5,5.2);
			\draw [color=zzttqq] (2.5,5.2)-- (-1.5,5.2);
			\draw (-3.5,3.)-- (-3.5,2.)-- (-1.5,2.) -- (-1.5,3.)-- cycle;
			\draw [line width=2.pt] (-3.5,3.)-- (-1.5,3);
			\draw (-1,3.)-- (-1,2.)-- (1,2.)-- (1,3.)--cycle;
			\draw [line width=2.pt] (-1,3.)-- (1,3);
			\draw (2.5,3.)-- (2.5,2.)-- (4.5,2.)-- (4.5,3.)--cycle;
			\draw [line width=2.pt] (4.5,3.)-- (2.5,3.);
			
			\draw [<->] (-2.5,3.)-- (-1.,4.);
			\draw [<->] (0,3.)-- (0.5,4.);
			\draw [<->] (3.5,3.)-- (2.,4.);
			\draw  (-1.,2.44)-- (-1.5,2.44);
			\draw [dashed] (1.3,2.38)-- (2.2,2.38);
			\begin{scriptsize}
				\draw[color=black] (0.5,5.07) node {\textbf{LEADER: Designer}};
				\draw[color=black] (0.33,4.87) node {\textbf{Minimize:} Total wastewater discharge};
				\draw[color=black] (-0.53,4.72) node {\textbf{Subject to:}}; 
				\draw[color=black] (0.55,4.52) node {Individual rationality constraint of each agent};
				\draw[color=black] (-0.2,4.35) node {Topological constraints};
				\draw[color=black] (-0.2,4.19) node {Equilibrium constraint};
				\draw[color=black] (-3,1.8) node {\textbf{FOLLOWERS}};
				\draw[color=black] (-2.5,2.9) node {Agent 1};
				\draw[color=black] (-2.8,2.7) node {\textbf{Minimize:} $\Cost_{1}(F)$};
				\draw[color=black] (-3.08,2.5) node {\textbf{Subject to:}};
				\draw[color=black] (-2.5,2.3) node {Physical constraints of agent 1}; 	\draw[color=black] (-2.75,2.1) node {Topological constraints};
				
				\draw[color=black] (0,2.9) node {Agent 2};
				\draw[color=black] (-0.3,2.7) node {\textbf{Minimize:} $\Cost_{2}(F)$};
				\draw[color=black] (-0.58,2.5) node {\textbf{Subject to:}};
				\draw[color=black] (0,2.3) node {Physical constraints of agent 2}; 
				\draw[color=black] (-0.24,2.1) node {Topological constraints};
				
				\draw[color=black] (3.5,2.9) node {Agent n};
				\draw[color=black] (3.2,2.7) node {\textbf{Minimize:} $\Cost_{n}(F)$};	\draw[color=black] (2.91,2.5) node {\textbf{Subject to:}};
				\draw[color=black] (3.5,2.3) node {Physical constraints of agent n}; 
				\draw[color=black] (3.26,2.1) node {Topological constraints};
			\end{scriptsize}
		\end{tikzpicture}
		\caption{General scheme of SLMF Game} \label{fig:SLMF-Game}
	\end{figure}
	
	It is clear that the latter approach is more realistic in designing and optimizing the water exchange network in the EIP because first, it helps to reduce the overall freshwater consumption; second, it reduces the operating costs of each enterprise; third, enterprises operating in the EIP do not need to share their information with other enterprises of the park, which is clearly a very important issue in the design of an optimal EIP. The SLMF approach of the optimal design of EIP has been introduced for the first time in \cite{Ramos2016} and then specialized in \cite{SALAS2020107053, BoixMontastruc_et_al_2018}). In fact in \cite{SALAS2020107053}, the authors have profoundly revisited the SLMF approach of \cite{Ramos2016} and have developed an abstract Blind-Input model for water exchange networks in EIPs. The main implicit assumption done in the blind-input model \cite{SALAS2020107053} is that each enterprise can only control his outlet distribution. They are thus forced to accept whatever is sent to them through the exchange network. This assumption seems actually quite restrictive since the enterprise may be forced to receive too much polluted water which could turn into higher costs than the stand-alone operation outside the park. However, to overcome this obstacle, the authors came up with the concept of a Blind-Input contract, which guarantees that the designer commits a minimal relative improvement of his operating cost, with respect to the stand-alone operation of the enterprise.
	
	In the present work, we propose another model for designing and optimizing exchange water networks by considering that each enterprise controls his input flux, which seems more realistic in the case of exchange networks. In other words, when participating \KC{in} the exchange network, each enterprise has the ability to control the amount of water coming from the other enterprises. This model is called \textit{control-input model}.\DS{ We show that} under some linear structure of the costs functions  $\Cost_{i}(\cdot)$ of each enterprise, the control-input model can be \DS{solved by finding the solution of an auxiliary single-level mixed-integer problem. This auxiliary problem can be approximated by a family of usual single-level mixed-integer linear problems, which can be treated by common commercial solvers.} The approach is validated on a case study of exchange water network in EIPs without regeneration units. Obtained results are compared with the blind-input model \cite{SALAS2020107053}.
	
	The rest of this paper is organized as follows. Section \ref{sec:EIP} provides the general problem statement which briefly describes the problem addressed in this article and present a control input model for water exchange networks in EIPs, based on a single-leader-multiple-follower model. \DS{The construction of the auxiliary problem, its approximations, and the mathematical arguments that validate it as an alternative to find solutions of the SLMF problem are developed in} Section \ref{sec:MIPR}.
	Numerical experiments on reasonably large EIP are presented in Section \ref{sec: numerical results} where comparisons with blind input approach are also provided.
	Finally, Conclusions and perspectives are presented in Section \ref{sec:conclusion}.
	
	\section{Control-Input model for water exchange networks in EIPs}
	\label{sec:EIP}
	
	\subsection{Problem statement}\label{subsec:Problem_statement}
	Let us consider a set of enterprises \KC{$\mathcal{P} :=\{ \mathcal{P}_1,\ldots, \mathcal{P}_n \}$} that are co-located in the same industrial park and are governed by rules made in the park. Each enterprise has its own pre-defined water input requirement and quality characteristics, as well as the quantity and quality of available output wastewater. After the operation of each enterprise, the discharge wastewater can be used as input for other enterprises in the park. Contrary to the model developed in \cite{SALAS2020107053} and \cite{Ramos2016}, our {\em Control-Input} model is based on the assumption that each enterprise has a control on his polluted inputs, namely, each enterprise has the ability to set the amount of water coming from the other enterprises. The goal of the model is to establish an optimal exchange network so that the total freshwater consumption, the total discharge wastewater, and the operating cost of each participating enterprise in the park are minimized, while satisfying all process and environmental constraints. The problem is structured as a SLMF problem wherein the EIP authority is the upper-level decision-maker and the enterprises interact through a GNEP as lower-level decision-makers. Each enterprise wants to minimize his cost of the use of water while the designer is in charge of the ecological concern by minimizing the fresh water consumption, thus encouraging the recycling or reuse of wastewater streams.
	
	The $n$ enterprises of the considered EIP are connected thanks to an exchange network. A sink node, represented by the index $0$ is of course included. It represents a waste pit to discharge polluted water. Each enterprise $\mathcal{P}_i$ can be connected to other enterprises and/or to the sink node. Setting by $I:=\{ 1,\ldots,n \}$ the index set of the $\mathcal{P}$ we also define $I_0 = \{ 0\}\cup I$. \DS{In what follows, we will use $n = |I|$ to denote the number of enterprises in the park.}
	
	\subsection{Enterprises' problem}\label{subsec:Enterprises}
	
	The goal of the designer is to built an exchange network so that part of this polluted water could be reused by other enterprises, reducing the global consumption of fresh water within the park. Here, an exchange network for the EIP is a simple directed graph $(I_0,E)$, where the connection $(i,j)\in E$ means that the enterprise $i$ can send its output water to the enterprise $j$.  In this sense, if the enterprise $i$ uses the connection $(i,0)$, then it means that it is discharging water outside the park, to the environment. 
	
	\DS{Defining the sets
		\begin{align*}
			E^{\rm st} &:=\{ (i,0)\, :\, i\in I  \},\\
			E^{\max} &:= \{ (i,j)\ :\ i\in I,\, j\in I_0 \}
		\end{align*}
		
		a valid exchange network $E$ is the set of connections satisfying $E^{\rm st}\subset E\subset E^{\max}$.} Note that the set $E^{\rm st}$ is the \emph{stand-alone configuration}, where each enterprise only has access to fresh water and, after using it, he must discharge it to the sink node. On the other hand $E^{\max}$ stands for the complete park, in the sense that all enterprises are connected between them, and each enterprise has a connection with the sink node. This definition yields that: 1) for every enterprise there is always the possibility of discharge; and 2) the sink node doesn't have any exit connections (it is not possible to recover water once it is discharged). We denote by $\mathcal{E}$ the family of valid networks for the EIP. \DS{Finally, for any $E\in\mathcal{E}$, we define its complement $E^c$  the set of connections that are not in $E$, that is, $E^c = E^{\max} \setminus E$}.
	
	For each $(k,i)\in E^{\max} $ we denote by $F_{k,i}$ [T/h] the water flux going from $k$ to $i$ through the connection $(k,i)$. We consider the following notation:
	\begin{itemize}
		\item $F_i = ( F_{k,i}\ :\ k \in I )$ is the vector of fluxes going to enterprise $i$,
		\item \KC{$F_{-i} = ( F_{k,j}\ :\ k \in I, j\in I\setminus \{i\})$} is the vector of all fluxes not going to enterprise $i$ and not going to sink node,
		\item $F =  ( F_i\ :\ i\in I )$ is the vector of fluxes between enterprises,
		\item for any $i\in I$, $z_i$ stands for the inlet fresh water and $F_{i,0}$ is the flux sent by enterprise $i$ to the sink node.
	\end{itemize}
	
	Finally, following classical notations of game theory, for an enterprise $i\in I$, we may write $F = (F_i,F_{-i})$, to stress the vector of fluxes between enterprise $i$. Then, for a fixed network $E$, \KC{valid flux vectors} $F= (F_{k,i}\ :\ (k,i)\in E^{\max} )$ and $F_0=(F_{i,0}\ :\ i\in I)$ must satisfy the following constraints:
	\begin{enumerate}
		\item {Use of connections in $E$:} since $E$ represents the available connections, we must put
		\begin{equation}\label{eq:NullFluxes}
			\forall (k,i)\in E^c,~~ F_{k,i} = 0.
		\end{equation}
		\item {Positivity of fluxes:} all the fluxes in the park must be positive:
		\begin{equation}\label{eq:Positivity-fluxes-EIP}
			\forall (k,i)\in E,\, F_{k,i}\geq 0\qquad\mbox{and}\qquad \forall i \in I,\, z_i\geq 0. 
		\end{equation}
		\item {Water mass balance:} since no water losses are considered, for any valid network $E$ and for each enterprise $i \in I$, we have 
		\begin{equation} \label{eq:watermass_balance}
			z_{i} +  \sum_{(k,i)\in E} F_{k,i}  =  \sum_{(i,j)\in E} F_{i,j} + F_{i,0}.
		\end{equation} 
	\end{enumerate}
	
	It is important to note that while each enterprise $i$ controls his inlet \KC{flux} $F_i$ and the outlet fluxes $F_{-i}$ are under the control of the concurrent enterprises, the fluxes $(F_{i,0})_{i\in I}$ corresponding to the amounts of discharged water are directly deduced by the water balance constraint \eqref{eq:watermass_balance} and will not be variables neither of the enterprises nor of the designer.
	
	\DS{To simplify notation, in what follows we will write for each $i\in I$
		\begin{align*}
			\sum_j F_{i,j} := \displaystyle\sum_{j\in I\setminus\{i\}} F_{i,j},\\
			\sum_k F_{k,i} := \displaystyle\sum_{k\in I\setminus\{i\}} F_{k,i}.
		\end{align*}
		That is, $\sum_j F_{i,j}$ stand for the aggregated outlet flux of $i$ sent to other agents $j\in I\setminus\{i\}$, and $\sum_k F_{k,i}$ stand for the aggregated inlet flux of $i$  received from other agents \KC{$k\in I\setminus\{i\}$}. Of course, these values are affected by the chosen network $E\in \mathcal{E}$, but they can be defined independently.}

	For an given couple $(E,F)$ of valid network and associated fluxes, an enterprise will be:
	\begin{itemize}
		\item in {\em semi-stand-alone} situation if, for any $k\in I$, $F_{k,i}=0$.
		\item in {\em stand-alone} situation if, for any $k,\,j\in E$, $F_{k,i}=0$ and $F_{i,j}=0$.
		\item \DS{in {\em active} situation, otherwise. That is, $i\in I$ is active when $\sum_{k}F_{k,i}>0$.}
	\end{itemize}
	
	Each enterprise $i\in I$ thus receives the fluxes from other enterprises within the EIP. Nevertheless for technical constraints on the process $\mathcal{P}_i$, the pollutant concentration delivered by the other enterprises cannot exceed a certain maximum value denoted here by $C_{i,\Cin}$ [ppm]. We assume here that \KC{each enterprise} $i\in I$, in order to use less fresh water, will actually accept a maximum of polluted water. The inlet flux is then mixed with the purchased fresh water $z_i$ generating therefore the inlet flux \DS{$z_{i} + \sum_k F_{k,i}$}. On the other hand enterprise $i$ generates a fixed amount of pollutant $M_i$ \KC{[g/h]}, coming from his internal production process. This pollutant is then diluted into the outlet water flux \DS{$\sum_j F_{i,j}+F_{i,0}$}, for which is usually assumed, in the design of EIP, that the pollutant concentration is less than a maximum value $C_{i,\Cout}$ [ppm]. Actually considering that enterprise $i$ will optimize his process, we will assume, as it is classically done for EIP the so-called \emph{hypothesis of optimal response} that each enterprise $i\in I$ consumes exactly the fresh water it needs to attain $C_{i,\Cout}$, and therefore, its output pollutant concentration is always equal to this constant. Obviously we have that $0\DS{\leq} C_{i,\Cin}< C_{i,\Cout}$.
	This functioning is illustrated in Figure \ref{fig:new_Concentration}.
	\begin{figure}[H] 
		\centering
		\definecolor{zzttqq}{rgb}{0.6,0.2,0.}
		\definecolor{cqcqcq}{rgb}{0.7529411764705882,0.7529411764705882,0.7529411764705882}
		\begin{tikzpicture}[line cap=round,line join=round,>=triangle 45,x=0.8cm,y=0.7cm]
			\fill[line width=4.pt,color=zzttqq,fill=zzttqq,fill opacity=0.10000000149011612] (0.18,0.5) -- (0.18,3.5) -- (2,2) -- cycle; 
			\fill[line width=4.pt,color=zzttqq,fill=zzttqq,fill opacity=0.10000000149011612] (11.82,0.5) -- (11.82,3.5) -- (10,2) -- cycle;
			
			\draw (4,0) -- (8,0) -- (8,4) -- (4,4) -- (4,0);
			
			\draw [color=zzttqq] (0.18,0.5)-- (0.18,3.5); 
			\draw [color=zzttqq] (2,2)-- (0.18,3.5);
			\draw [color=zzttqq] (0.18,0.5)-- (2,2); 
			
			\draw [->] (-1.5,0.5) -- (0.18,0.5);
			\draw [->] (-1.5,2) -- (0.18,2);
			\draw [->] (-1.5,3.5) -- (0.18,3.5);
			
			\draw[color=black,below] (-1.5,3.5) node {$F_{1,i}$};
			\draw[color=black,below] (-1.5,0.5) node {$F_{n,i}$};
			
			\draw [->] (2,2) -- (4,2);
			\draw [->] (8,2) -- (10,2);
			
			\draw [color=zzttqq] (11.82,0.5)-- (11.82,3.5); 
			\draw [color=zzttqq] (10,2)-- (11.82,0.5); 
			\draw [color=zzttqq] (10,2)-- (11.82,3.5); 
			
			\draw [->] (11.82,3.5) -- (13.82,3.5); 
			\draw [->] (11.82,2) -- (13.82,2); 
			\draw [->] (11.82,0.5) -- (13.82,.5);
			
			\draw[color=black,below] (13.82,3.5) node {$F_{i,1}$};
			\draw[color=black,below] (13.82,.5) node {$F_{i,n}$};
			
			\draw [color=black,->] (3,5)-- (3,2.25); 
			\draw [color=black,->] (9,5)-- (9,2.25);
			\draw [color=black,->] (0.9,-0.9)-- (2.5,1.9);
			\draw [color=black,-] (0,-0.9)-- (0.9,-0.9);
			\draw [color=black,-] (11.2,-0.9)-- (9.5,2);
			\draw [color=black,->] (11.2,-0.9)-- (12.1,-0.9);	
			
			\begin{scriptsize}
				\draw[color=zzttqq] (0.9,2) node {Mixer};
				\draw[color=zzttqq] (11,2) node {\tiny Dispatcher};
				\draw[color=black] (-1.1,-0.8) node {Freshwater $z_i$};
				\draw[color=black] (3,6) node {Max inlet pollutant};
				\draw[color=black] (3,5.6) node {concentration $C_{i,\Cin}$};
				\draw[color=black] (6,2) node {Process $\mathcal{P}_{i}$};
				\draw[color=black] (8.5,6) node {Max outlet pollutant};
				\draw[color=black] (8.5,5.6) node {concentration $C_{i,\Cout}$};
				\draw[color=black] (13.3,-0.9) node {Discharge $F_{i,0}$};
			\end{scriptsize}
		\end{tikzpicture}
		\caption{Water mixture description for a given process $\mathcal{P}_i$.\label{fig:new_Concentration}}
	\end{figure}
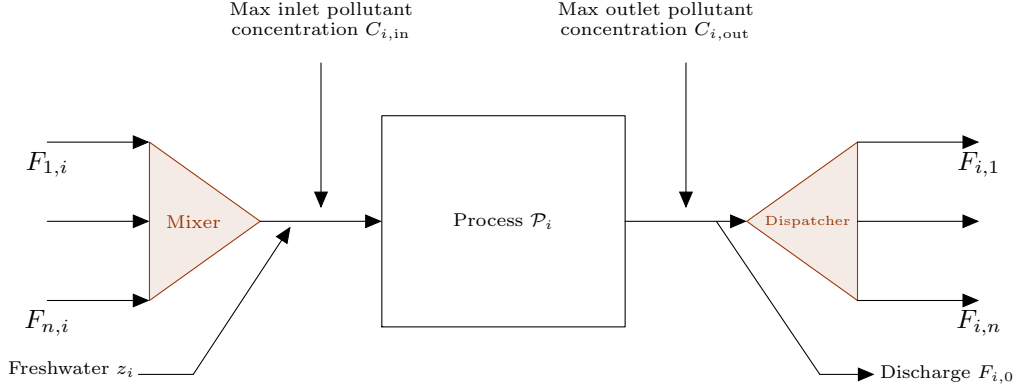
	
	Taking into account the above explanations, for any valid network $E\in\mathcal{E}$, the water mixture model immediately leads to the following constraints:
	
	\begin{enumerate}
		\item {Contaminant mass balance:} each enterprise $i \in I$ wants to receive the maximum of pollutant and thus the contaminant mass balance is
		\begin{equation}\label{eq:new_contaminant_balance}
			M_i+C_{i,\Cin}\Phi_i(z_i,(F_{k,i})_k)=C_{i,\Cout}\left(\DS{\sum_{j} F_{i,j}}+F_{i,0}\right),
		\end{equation}
		\noindent where the auxiliary function $\Phi_i:\R\times \R^{n-1}\to\R$ is defined as
		\[\KC{\Phi_{i}(z_i,(F_{k,i})_k)} = \left\{\begin{array}{ll}
			z_i+\DS{\sum_{k} F_{k,i}}  & \mbox{If there exists } \,k\mbox{ such that } F_{k,i}>0\\[5pt]
			0 & \mbox{otherwise}.
		\end{array}\right.
		\]
		Note that the auxiliary function $\Phi_i$ allows to take into account that when enterprise $i$ doesn't receive any polluted water, thus being in a semi-stand-alone or stand-alone situation, then the pollutant only comes from the internal generation $M_i$.
		\item {Inlet/outlet concentration constraints:} for an enterprise $i \in I$ we have that:
		\begin{equation} \label{eq:Inletoutlet_independent}
			\sum_{k\in I\setminus\{i\}} C_{k,\mathrm{out}} F_{k,i}  \leq   C_{i,\Cin} \left (  z_{i} + \sum_{k} F_{k,i}\right ).
		\end{equation}
	\end{enumerate}
	Observe that, in the case of an enterprise $i$ in semi-stand-alone or stand-alone situation, combining equations \eqref{eq:watermass_balance} and  \eqref{eq:new_contaminant_balance} we obtain:
	\[
	M_i=C_{i,\Cout}\left(\sum_{j} F_{i,j} + F_{i,0}\right),
	\]
	and so, the discharge of wastewater by the enterprise $i\in I$ is given by the fluxes controlled by the other enterprises, thanks to the formula
	\begin{equation} \label{eq:new_Freshwater_formula_st}
		\DS{F_{i,0}}=\frac{M_i}{C_{i,\Cout}}-\sum_{j} F_{i,j}.
	\end{equation}
	For all the other enterprises, neither in semi-stand-alone nor in stand-alone situation, by combining \eqref{eq:watermass_balance} and \eqref{eq:new_contaminant_balance}, one gets
	\[
	M_i+C_{i,\Cin}\left(\sum_{j} F_{i,j} + F_{i,0}\right)=C_{i,\Cout}\left(\sum_{j} F_{i,j} + F_{i,0}\right),
	\]
	and so, the discharge of wastewater by the enterprise $i\in I$ is given by the fluxes controlled by the other enterprises, thanks to the following formula
	\begin{equation} \label{eq:new_Freshwater_formula}
		\DS{F_{i,0}}=\frac{M_i}{C_{i,\Cout}-C_{i,\Cin}}-\sum_{(i,j)\in E} F_{i,j}.
	\end{equation}
	Mixing both equations \eqref{eq:new_Freshwater_formula_st} and \eqref{eq:new_Freshwater_formula}, we conclude that $F_{i,0}$ is given by the following explicit piecewise formula:
	\DA{\begin{equation}\label{eq:ExpressionF0-Piecewise}
			F_{i,0}(F)=\begin{cases}
				\frac{M_i}{C_{i,\Cout}}-\sum_{j} F_{i,j}\qquad&\text{ if }i \text{ operates stand-alone or semi-stand-alone},\\
				& \text{ that is if }\sum_{k} F_{k,i}=0,\\[5pt]
				\frac{M_i}{C_{i,\Cout}-C_{i,\Cin}}-\sum_{j} F_{i,j}\qquad&\text{ otherwise, that is if }\sum_{k} F_{k,i}>0.
			\end{cases}
		\end{equation}
	}
	Now we denote by $c$ [\$/T] the marginal cost of freshwater, \KC{and by $\beta$ [\$/T] the unit tax for discharged water}. Observe that, if the enterprise $i$ doesn't receive polluted water (i.e. is in semi-stand-alone or stand-alone situation), then its freshwater consumption $z_i$ must be
	\[
	z_i = \frac{M_i}{C_{i,\Cout}}.
	\]
	Then, the cost of stand-alone operation, which we denote by $\STC_i$ \KC{[\$/h]}, is given by
	\[
	\STC_i = A\cdot(c + \KC{\beta})\frac{M_i}{C_{i,\Cout}},
	\] 
	where $A$ [h] is a time constant that measures the lifetime analysis of the park.
	
	In the SLMF model, given a valid exchange network \KC{$E$}, each enterprise $i\in I$ has a cost function that he wants to minimize, defined by 
	\KC{\begin{equation} \label{eq:CostEIP}
			\displaystyle\Cost_{i}(z_i,F_i,F_{-i}, \KC{E})  =   A \Biggl[   c\cdot  z_{i}
			+   \DS{\gamma}\left(\sum_{k} F_{k,i} + \sum_{j} F_{i,j} \right) + \beta \DS{F_{i,0}(F_i,F_{-i})}\Biggr], 
		\end{equation}
		where  $\DS{\gamma}$ [\$/T] is the unit cost for the use of shared connection between nodes. In this model, we assume that the cost $c$ is much higher than $\delta$}. 
	Observe that each enterprise pays for each connection he uses, both for the inlet fluxes and outlet fluxes. In the case of the connections between two enterprises, this means that the cost of the connection is divided uniformly between the sending enterprise and the receiving enterprise.  
	
	With all these considerations, for a given valid exchange network $E\in \mathcal{E}$, the problem of each enterprise $i$ is given by problem 
	\DA{\begin{equation}\label{eq:ProblemOfIndependentAgent_EIP}
			P_i\left (z_{-i},F_{-i},E\right ) = \left\{\begin{array}{cl}
				\displaystyle \min_{z_i,F_i}\, &\Cost_i\left (z_i,F_i,F_{-i}, E\right )\\
				s.t.&\begin{cases}
					F_{k,i} = 0, ~~\forall (k,i)\in E^c,\\
					z_{i} +  \sum_{(k,i)\in E} F_{k,i}  =  \sum_{(i,j)\in E} F_{i,j} + F_{i,0}(F_i,F_{-i}),\\
					\sum_{k\in I\setminus\{i\}} C_{k,\mathrm{out}} F_{k,i}  \leq   C_{i,\Cin} \left (  z_{i} + \sum_{k} F_{k,i}\right )\\
					z_{i}\geq 0,\\
					F_i\geq 0,\\
					F_{i,0}(F_i,F_{-i})\geq 0.
				\end{cases}
			\end{array}\right.
		\end{equation}
	}
	
	Observe that constraint \eqref{eq:new_contaminant_balance} is implicit in the expressions of $F_{i,0}(F)$ given by \eqref{eq:ExpressionF0-Piecewise}.
	For a network $E\in \mathcal{E}$, we say that a vector $(z,F)$ is an equilibrium for the enterprises if and only if
	\begin{equation}\label{eq:DefEquilibria}
		\forall i\in I,\: (z_i,F_i) \mbox{ solves the problem }P_i\left(z_{-i}, F_{-i},E\right).
	\end{equation}
	We denote by $\mathrm{Eq}(E)$ the set of \KC{equilibria} for the valid exchange network $E$.
	
	\DA{It is important to quote that the existence of such equilibria cannot be guarantee here neither by classical Arrow-Debreu existence result for generalized Nash equilibrium problem (see \cite{Arrow_Debreu1954}) nor by the more recent one (see \cite{Aussel_CaoVan_Salas2021}). Indeed, given a vector $F_{-i}$, the function $F_i\mapsto F_{i,0}(F_i,F_{-i})$ described in \eqref{eq:ExpressionF0-Piecewise} is actually upper semi-continuous but not lower semi-continuous. This can be seen by rewriting this function as $F_{i,0}(F_i,F_{-i})=\sum_j F_{i,j}-\varphi_i(F_{i})$ where 
		\begin{equation}\label{eq:ExpressionF0-Piecewise_bis}
			\varphi_{i}(F_i)=\begin{cases}
				\frac{M_i}{C_{i,\Cout}}-\sum_{j} F_{i,j}\qquad&\text{ if } F_i\in \Pi,\\[5pt]
				\frac{M_i}{C_{i,\Cout}-C_{i,\Cin}}-\sum_{j} F_{i,j}\qquad&\text{ otherwise.}
			\end{cases}
		\end{equation}
		with $\Pi$ being the hyperplane \KC{$\Pi=\{F_i\in ~:~ \sum_{k} F_{k,i}=0\}$}. It is then clear that the function $\varphi_i$ is lower semi-continuous but not upper semi-continuous.
	}
	
	\subsection{Designer's problem}\label{subsec:Designer}
	The designer being in charge of the reduction of the environmental impact of the park, he wants to minimize the total wastewater discharge, that is function  
	\begin{equation}\label{eq:Totalwastewater}
		F_{0}(F) = \sum_{i \in  I} \DS{F_{i,0}(F)}.
	\end{equation} 
	\DS{By adding the water balance constraints \eqref{eq:watermass_balance} of all nodes, the total discharge water coincides with the total fresh water consumption of the park, that is, $F_0(F) = \sum_{i\in I}z_i$}. So by selecting an appropriate network $E$ and a compatible operation $(z,F)\in \mathrm{Eq}(E)$, the designer will propose an optimal design for the park with this criterium, which is usual in the literature (see, e.g., \cite{SALAS2020107053,Ramos2016} and the references therein). Nevertheless there is no a priori guarantee that doing so the resulting cost of each enterprise will be lower than his stand-alone cost $\STC_i$ and thus that every enterprise will finally accept to participate to the EIP. In order to have strong arguments to encourage enterprise to be involved into the EIP, the designer engages in a contract with them. This contract will guarantee a {\em minimal relative gain}, with respect to the stand-alone operation, denoted by $\alpha\in\, ]0,1[$, on the cost of every enterprise. 
	\begin{equation}\label{eq:BlindInputConstraint}
		\Cost_{i}(z_i,F_i,F_{-i}, E) \leq  \alpha \cdot \STC_i.
	\end{equation}
	This type of minimal relative gain contract was first introduced in \cite{SALAS2020107053}. Then, the problem of the designer is
	\begin{equation}\label{eq:ProblemDesigner}
		\begin{aligned}
			&\min_{E\in\mathcal{E},z\in\R^n,F\in\R^{|E^{\max}|}}\, F_{0}(F)\\
			&s.t.\begin{cases}
				(z,F) \in \mathrm{Eq}(E),\\
				\Cost_{i}(z_i,F_i,F_{-i}, E) \leq \alpha \cdot \STC_i,&\quad\forall i\in I.
			\end{cases}
		\end{aligned}
	\end{equation}
	
	The optimization problem \ref{eq:ProblemDesigner} can be interpreted as follows: the designer will propose to the enterprises a valid exchange network $E$, a vector $z$, and an operation $F\in\R^{|E^{\max} |}$ which satisfy all the physical constraints and also, such that the operation $F$ respects: 
	\begin{itemize}
		\item [1) ] the {\em incentive consistency}, in the sense that no enterprise will have incentives to unilaterally deviate from the proposal due to the constraint $(z, F) \in \mathrm{Eq}(\KC{E})$;
		\item [2) ] the {\em individual rationality} of each
		enterprise, in the sense that all enterprises will participate in the network
		since their participation has been bought through the constraint \eqref{eq:BlindInputConstraint}.
	\end{itemize}
	
	Let us finally bring to the fore that the designer problem corresponds to the  so-called {optimistic approach} of the SLMF game. Indeed for any given valid exchange network $E_0$, the set $\mathrm{Eq}(\KC{E})$ can contain more than one equilibrium and the designer choose the one which minimizes the function $F_0(F)$. For more details on the different possible approaches for SLMF games the reader can refer to \cite{Aussel_Svensson2020}.
	
	\subsection{Selection of participating enterprises}\label{subsec:NullClass}
	
	Physically, we know that  the network has always a feasible point, which is the Stand-Alone configuration, that is, the topology $E^{\rm st}$. However, when we include the individual rationality constraint \eqref{eq:BlindInputConstraint}, the problem may become infeasible since $\alpha<1$.
	
	Infeasibility of problem \eqref{eq:ProblemDesigner} means that the designer is not capable to find a solution that respect the contracts with all the enterprises. Thus, we need to include the possibility of excluding some enterprises from the network.  
	
	Formally, for each enterprise $i\in I$, we define a boolean variable $y_{i,\mathrm{null}}\in \{ 0,1\}$ such that
	\[
	y_{i,\mathrm{null}} = \begin{cases}
		1\qquad& \mbox{ if }i\mbox{ breaks the contract } \eqref{eq:BlindInputConstraint},\\
		0&\mbox{ otherwise.}
	\end{cases}
	\] 
	and denote by $y_{\mathrm{null}}$ the vector $y_{\mathrm{null}}=(y_{i,\mathrm{null}}\ :\ i\in I)$. With this new variable, we modify problem \eqref{eq:ProblemDesigner} adding the following constraints
	\begin{enumerate}
		
		\item  For each enterprise $i\in I$, we put
		\begin{equation}\label{eq:StandAloneNoOutlet}
			\sum_{j} F_{i,j} \leq K\cdot(1-y_{i,\mathrm{null}}),
		\end{equation}
		for some constant $K>0$ large enough. This is to ensure that, if the enterprise breaks the contract, then he cannot send his polluted water to any enterprises.
		\item For each enterprise $i\in I$, we put
		\begin{equation}\label{eq:StandAloneNoInlet}
			\sum_{k } F_{k,i} \leq K\cdot (1-y_{i,\mathrm{null}}),
		\end{equation}
		for some constant $K>0$ large enough. This constraint establishes that, if the enterprise breaks the contract, then nobody can send him any flux.
		\item For each enterprise $i\in I$, we put
		\[
		\Cost_{i}(z_i, F_i,F_{-i},\KC{E}) \leq \alpha_i \STC_i\cdot(1-y_{i,\mathrm{null}}) + \STC_i\cdot y_{i,\mathrm{null}}.
		\]
		Here, the individual rationality constraint is active only when $y_{i,\mathrm{null}} = 0$. Otherwise, since the enterprise is not connected to the network, his cost will coincide with $\STC_i$.
	\end{enumerate} 
	\DS{In this formulation, the constant $K>0$ is used as a big-M strategy: when $y_{i,\mathrm{null}} = 0$, the constraints \eqref{eq:StandAloneNoOutlet} and \eqref{eq:StandAloneNoInlet} are trivially fulfilled, since $K$ is always an upper bound for $\sum_{j} F_{i,j}$ and $\sum_{k} F_{k,i}$. A large enough value for $K$ is given by
		\begin{equation}\label{eq:ValueOfK}
			K = (|I|-1)\left(\sum_{i\in I} \frac{M_i}{C_{i,\Cout}}\right),
		\end{equation}
		
		which corresponds to the total water consumption on the EIP when every agent is stand-alone, multiplied by  the total number of agents besides enterprise $i$.} Now, denoting 
	\begin{equation}\label{eq:NewFormulaSTC}
		\STC_i(\alpha, y_{i,\mathrm{null}}) := \alpha \STC_i\cdot(1-y_{i,\mathrm{null}}) + \STC_i\cdot y_{i,\mathrm{null}},
	\end{equation}
	the new optimization problem becomes
	\begin{equation}\label{eq:ProblemOptDesign-NullClass}
		\begin{aligned}
			&\min_{E,y_{\mathrm{null}},z,F}\, F_{0}(F)\\
			&s.t.\begin{cases}
				(z,F) \in \mathrm{Eq}(E,y_{\mathrm{null}}),\\
				\displaystyle\sum_{(i,j)\in E} F_{i,j} \leq K\cdot(1-y_{i,\mathrm{null}}),\qquad&\forall i\in I,\\
				\displaystyle\sum_{(k,i)\in E} F_{k,i} \leq K\cdot (1-y_{i,\mathrm{null}}),\qquad&\forall i\in I,\\
				\Cost_{i}(z_i,F_i,F_{-i}, E) \leq \KC{\STC_i(\alpha, y_{i,\mathrm{null}})},\quad&\forall i\in I.
			\end{cases}
		\end{aligned}
	\end{equation}

	Here, $ \mathrm{Eq}(E,y_{\mathrm{null}})$ stands for the solutions of the equilibrium problem considering only the agents in
	\[
	I' = \{ i\in I\ :\ y_{i,\mathrm{null}} = 0 \},
	\]
	and forcing the agents in $I\setminus I'$ to be stand-alone. Thus, in this new problem, the designer first choose all the enterprises that will participate in the network, represented by the set 
	$I' = \{ i\in I\ :\ y_{i,\mathrm{null}} = 0 \}$ 
	and then it solves problem \eqref{eq:ProblemDesigner} replacing $I$ by $I'$. Of course, as it is formulated, the designer takes both decisions simultaneously. It is not hard to verify that any optimal solution of Problem \eqref{eq:ProblemOptDesign-NullClass} is an optimal solution of Problem  \eqref{eq:ProblemDesigner} for the reduced set of enterprises $I'$, and that Problem \eqref{eq:ProblemOptDesign-NullClass} is always feasible. We leave this verification to the reader. 
	
	\section{Mixed-Integer programming reduction}\label{sec:MIPR}
	
	The formulation of designer's problem  \eqref{eq:ProblemOptDesign-NullClass} has the form of
	a \textit{mathematical programming with equilibrium constraints} (MPEC). This section
	is devoted to prove that this MPEC formulation, which is known to be hard to solve (see, e.g., \cite{BAUMRUCKER20082903,Tseveendorj2013,luo1996mathematical}), can be sequentially approximated through a family of auxiliary single-level Mixed-Integer linear programming problems.
	
	To develop such approach, our methodology consist in construct a single optimization problem following only the objective of the designer, which is minimizing the total discharged water, without considering the equilibrium constraint. We will allow the designer to choose all the variables $(E,y_{\mathrm{null}},z,F)$, with some considerations:
	\begin{enumerate}
		\item The operation $(z,F)$ must be feasible for $(E,y_\mathrm{null})$, that is, for every agent with $y_{i,\mathrm{null}} = 0$, $(z_i,F_i)$ must be feasible for problem $P_i(z_{-i},F_{-i},E)$.
		\item The designer should include some new constraints that ensure that any solution $(E^*,y_{\mathrm{null}}^*,z^*,F^*)$ of the new problem becomes a feasible point for the original problem \eqref{eq:ProblemOptDesign-NullClass}. 
	\end{enumerate}
	
	When implementable, this methodology provides a new problem that is a relaxation in the sense that admits more feasible configurations, but a solution to this ``larger'' problem is also a solution for the original one. This section is devoted to the construction of the auxiliary problem and to proving the latter property.
	\subsection{Characterization of active agents}
	
	One of the main difficulties in order to construct the desired auxiliary problem is the discontinuity of the objective function $F_0(F)$. Indeed, recalling that for each agent $i\in I$, we have that
	\[
	F_{i,0}(F)=\begin{cases}
		\frac{M_i}{C_{i,\Cout}}-\sum_{j} F_{i,j}\qquad&\text{ if }i \text{ operates stand-alone or semi-stand-alone},\\
		\frac{M_i}{C_{i,\Cout}-C_{i,\Cin}}-\sum_{j} F_{i,j}\qquad&\text{ otherwise,}
	\end{cases}
	\]
	then  the function $F_0(F)$ has a discontinuity when an agent $i\in I$ passes from an active operation (that is, $\sum_kF_{k,i}>0$) to a semi-stand-alone operation  (that is, $\sum_kF_{k,i}=0$). If the designer wants an agent to be at stand-alone or semi-stand-alone situation, this can be done by choosing $E\in\mathcal{E}$ correctly. However, the designer cannot force an agent to be active: An agent $i\in I$ that is active for the designer's solution could change to semi-stand-alone operation if this action is profitable.
	
	In order to construct an auxiliary problem that takes this possibility into account, we need to characterize when an active agent doesn't have any interest in changing to semi-stand-alone operation. To do so, for each $i\in I$ let us define the value
	\begin{equation}\label{eq:DeltaDef}
		\Delta_i := \frac{M_i}{C_{i,\Cout} - C_{i,\Cin}} - \frac{M_i}{C_{i,\Cout}} = \frac{C_{i,\Cin}}{C_{i,\Cout}(C_{i,\Cout} - C_{i,\Cin})}M_i.
	\end{equation}
	This value represent the discontinuous reduction of discharged water when the enterprise $i$ passes from active operation to semi-stand-alone operation.

	
	\begin{lemma}[First Active agent lemma]\label{lemma:FirstActiveAgent}
		Let $E\in \mathcal{E}$ and $y_{\mathrm{null}}\in \{0,1\}^{n}$. Then, for any feasible pair $(z,F)$ for $(E,y_{\mathrm{null}})$ and any agent $i\in I$ with $\sum_kF_{k,i}>0$, we have that
		\begin{equation}\label{eq:EquivalenceDelta}
			\Cost_i(z_i,F_i,F_{-i},\KC{E})\leq \Cost_i(M_i/C_{i,\Cout},0,F_{-i},\KC{E})     \iff \sum_k F_{k,i} \geq \frac{c+\beta}{c-\gamma} \Delta_i,
		\end{equation}
		regardless if the semi-stand-alone operation $(z_i',F_i') = (M_i/C_{i,\Cout},0)$ is feasible or not for agent $i$'s problem $P_i(z_{-i},F_{-i},E)$. 	
	\end{lemma}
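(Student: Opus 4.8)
The plan is to prove \eqref{eq:EquivalenceDelta} by a direct computation of both cost values in closed form, each evaluated through the explicit piecewise formula \eqref{eq:ExpressionF0-Piecewise} for $F_{i,0}$. First I would handle the left-hand cost. Since $(z,F)$ is feasible for $(E,y_{\mathrm{null}})$ and $\sum_k F_{k,i}>0$, agent $i$ sits in the active branch, so that $F_{i,0}=\frac{M_i}{C_{i,\Cout}-C_{i,\Cin}}-\sum_j F_{i,j}$, and the water-mass balance \eqref{eq:watermass_balance} forces
\[
z_i=\frac{M_i}{C_{i,\Cout}-C_{i,\Cin}}-\sum_k F_{k,i}.
\]
Substituting both expressions into the cost \eqref{eq:CostEIP} and collecting terms, the active cost reduces to
\[
\Cost_i(z_i,F_i,F_{-i},E)=A\left[(c+\beta)\frac{M_i}{C_{i,\Cout}-C_{i,\Cin}}+(\gamma-c)\sum_k F_{k,i}+(\gamma-\beta)\sum_j F_{i,j}\right].
\]

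Next I would compute the deviation cost. Setting $(z_i',F_i')=(M_i/C_{i,\Cout},0)$ makes the inlet vanish, so agent $i$ falls in the semi-stand-alone branch, giving $F_{i,0}=\frac{M_i}{C_{i,\Cout}}-\sum_j F_{i,j}$; plugging into \eqref{eq:CostEIP} yields
\[
\Cost_i(M_i/C_{i,\Cout},0,F_{-i},E)=A\left[(c+\beta)\frac{M_i}{C_{i,\Cout}}+(\gamma-\beta)\sum_j F_{i,j}\right].
\]
The key observation is that $F_{-i}$, and hence the outlet aggregate $\sum_j F_{i,j}$, is held fixed across both evaluations, so the $(\gamma-\beta)\sum_j F_{i,j}$ terms are identical and cancel in the difference. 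Using the definition \eqref{eq:DeltaDef} of $\Delta_i=\frac{M_i}{C_{i,\Cout}-C_{i,\Cin}}-\frac{M_i}{C_{i,\Cout}}$, the difference collapses to
\[
\frac{1}{A}\Big(\Cost_i(z_i,F_i,F_{-i},E)-\Cost_i(M_i/C_{i,\Cout},0,F_{-i},E)\Big)=(c+\beta)\Delta_i+(\gamma-c)\sum_k F_{k,i}.
\]

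Finally, the left-hand inequality of \eqref{eq:EquivalenceDelta} is equivalent to $(c+\beta)\Delta_i\leq(c-\gamma)\sum_k F_{k,i}$, and dividing by $c-\gamma>0$ --- which holds under the standing assumption that $c$ dominates $\gamma$ --- gives exactly the threshold $\sum_k F_{k,i}\geq\frac{c+\beta}{c-\gamma}\Delta_i$. The one point that requires care, and the reason the equivalence holds ``regardless of feasibility'', is that the deviation cost must be read purely as the value of the formula \eqref{eq:CostEIP} under the appropriate branch of \eqref{eq:ExpressionF0-Piecewise}: no constraint of $P_i(z_{-i},F_{-i},E)$ is invoked, so whether $(M_i/C_{i,\Cout},0)$ is admissible plays no role in the comparison. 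Once the correct branch is selected on each side and the fixed outlet terms are cancelled, the remaining algebra is routine.
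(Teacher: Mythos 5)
Your proof is correct and takes essentially the same route as the paper's: both arguments reduce the comparison to the same closed-form cost difference, namely that $\Cost_i(z_i,F_i,F_{-i},E)-\Cost_i(M_i/C_{i,\Cout},0,F_{-i},E)$ equals $A\bigl[(c+\beta)\Delta_i-(c-\gamma)\sum_k F_{k,i}\bigr]$, and both stress that the deviation cost is evaluated purely through the piecewise formula for $F_{i,0}$, so feasibility of the semi-stand-alone operation is never invoked. The only cosmetic difference is that you extract $z_i$ and $z_i'$ directly from agent $i$'s own water balance, whereas the paper obtains $z_i'-z_i=\sum_k F_{k,i}-\Delta_i$ by aggregating the water balances of all nodes; the resulting algebra and threshold are identical.
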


	\begin{proof}
		Let $(z,F)$ be a feasible pair for $(E,y_{\mathrm{null}})$, and let $i\in I$ with $\sum_kF_{k,i}>0$. Set $z_i' = M_i/C_{i,\Cout}$ and $F_i' = 0$. Then, $F_{i,0}(F_i',F_{-i}) = M_i/C_{i,\Cout} - \sum_jF_{i,j}$ (which might be negative if semi-stand-alone operation is infeasible for $i$). Then, it is not hard to see that
		\[
		F_{i,0}(F_i',F_{-i}) - F_{i,0}( F) = -\Delta_i.
		\]
		Furthermore, for each node $k\neq i$, we have that
		\[
		F_{k,0}(F_{i}',F_{-i}) = F_{k,i} + F_{k,0}(F),
		\]
		that is, the partially polluted water $F_{k,i}$ that is not consumed by $i$ must pass to be a discharged water for $k$. This yields that water balance constraint still holds for every $k\in I\setminus\{i\}$. In the case of agent $i$, water balance constraint also holds, even though the value of $F_{i,0}(F_i',F_{-i})$ might be negative. Finally, by adding the water balance constraints of all nodes, we can write
		\begin{align*}
			z_i' - z_i &= z_i' + \sum_{k\neq i} z_k - \sum_{k\in I} z_k\\
			&= F_{i,0}(F_i',F_{-i}) + \sum_{k\neq i } F_{k,0}(F_i',F_{-i}) - \sum_{k\in I } F_{k,0}(F)\\
			&= -\Delta_i + \sum_{k\neq i} (F_{k,0}(F_i',F_{-i}) - F_{k,0}(F))\\
			&= \sum_{k\in I} F_{k,i} - \Delta_i.
		\end{align*}
		Then we can write
		\begin{align*}
			&\Cost_i(z_i',F_i',F_{-i},E) - \Cost_i(z_i,F_i,F_{-i},E)\\
			=\quad& c(z_i'-z_i) + \gamma\left(\textstyle\sum_{k} F_{k,i}' - F_{k,i}\right) + \beta( F_{i,0}(F_i',F_{-i}) - F_{i,0}(F) )\\
			=\quad&c\left( \textstyle\sum_{k} F_{k,i} - \Delta_i \right) - \gamma {\textstyle\sum_{k} F_{k,i}} - \beta\Delta_i.
		\end{align*}
		By rearranging the terms, we conclude that 
		\[
		\Cost_i(z_i',F_i',F_{-i},E) - \Cost_i(z_i,F_i,F_{-i},E)\geq 0\iff \sum_{k} F_{k,i} \geq \frac{c+\beta}{c-\gamma}\Delta_i.
		\]
		The above development holds regardless if $F_{i,0}(F'_i,F_{-i}) = M_i/C_{i,\Cout} - \sum_j F_{i,j}$ is positive or not, and thus the equivalence is still valid even if semi-stand-alone operation is not feasible for agent $i$. This finishes the first the proof.
		
	\end{proof}
	
	The first Active agent lemma characterizes when semi-stand-alone is profitable for an active agent. However, our goal is to replace the equilibrium constraint for usual constraints in our auxiliary problem. With this in mind and using Lemma \ref{lemma:FirstActiveAgent}, the next lemma characterizes how must be the relation with the semi-stand-alone operation for agents in an equilibrium, in the sense of \ref{eq:DefEquilibria}.
	
	\begin{lemma}[Second Active agent lemma]\label{lemma:SecondActiveAgent}
		Let $E\in \mathcal{E}$ and $y_{\mathrm{null}}\in \{0,1\}^{n}$ and let $(z,F)\in \mathrm{Eq}(E,y_{\mathrm{null}})$. Then, for every $i\in I$ with $y_{i,\mathrm{null}} = 0$, we have that
		\begin{equation}\label{eq:ConditionsActive-notSemiSTA}
			i\in I\text{ is active} \iff \frac{M_i}{C_{i,\Cout}} - \sum_j F_{i,j} < 0\quad\text{or}\quad \sum_k F_{k,i} \geq \frac{c+\beta}{c-\gamma} \Delta_i. 
		\end{equation}
	\end{lemma}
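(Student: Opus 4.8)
The plan is to prove the two implications separately, using Lemma \ref{lemma:FirstActiveAgent} as the main engine and invoking the equilibrium property only where optimality of follower $i$'s response is actually needed. The recurring observation that organizes the whole argument is a feasibility characterization of the candidate deviation $(z_i',F_i') = (M_i/C_{i,\Cout},0)$: this semi-stand-alone point is feasible for $P_i(z_{-i},F_{-i},E)$ if and only if its induced discharge $F_{i,0}(F_i',F_{-i}) = M_i/C_{i,\Cout} - \sum_j F_{i,j}$ is nonnegative. Indeed, with $F_i' = 0$ the connection constraints and positivity constraints hold trivially, and the inlet concentration constraint \eqref{eq:Inletoutlet_independent} reduces to $0\leq C_{i,\Cin}z_i'$, which is automatic; so the only nontrivial requirement is $F_{i,0}'\geq 0$. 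Consequently, the first disjunct $\frac{M_i}{C_{i,\Cout}} - \sum_j F_{i,j} < 0$ on the right-hand side of \eqref{eq:ConditionsActive-notSemiSTA} is precisely the statement that the semi-stand-alone deviation is infeasible for $i$.

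For the forward implication I would fix an active agent $i$, so $\sum_k F_{k,i} > 0$, and split on the feasibility of the deviation just described. If it is infeasible, the first disjunct holds and we are done. If it is feasible, then since $(z,F)\in \mathrm{Eq}(E,y_{\mathrm{null}})$ and $y_{i,\mathrm{null}} = 0$, the pair $(z_i,F_i)$ minimizes $\Cost_i(\cdot,\cdot,F_{-i},E)$ over the feasible set of $P_i(z_{-i},F_{-i},E)$; comparing it against the feasible competitor $(z_i',F_i')$ yields $\Cost_i(z_i,F_i,F_{-i},E)\leq \Cost_i(M_i/C_{i,\Cout},0,F_{-i},E)$. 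Applying Lemma \ref{lemma:FirstActiveAgent}, whose hypothesis $\sum_k F_{k,i}>0$ is met, converts this cost inequality into $\sum_k F_{k,i} \geq \frac{c+\beta}{c-\gamma}\Delta_i$, which is exactly the second disjunct.

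For the reverse implication I would argue through the two disjuncts directly. If the second disjunct $\sum_k F_{k,i} \geq \frac{c+\beta}{c-\gamma}\Delta_i$ holds, then since $c>\gamma$ (as assumed in the model) and $\Delta_i>0$ the bound is strictly positive, forcing $\sum_k F_{k,i} > 0$, i.e. $i$ is active. If instead only the first disjunct holds, I would suppose toward a contradiction that $i$ is not active, i.e. $\sum_k F_{k,i}=0$; then $i$ operates semi-stand-alone, so by \eqref{eq:ExpressionF0-Piecewise} its discharge equals $M_i/C_{i,\Cout} - \sum_j F_{i,j}$, and feasibility of $(z,F)$ forces this quantity to be nonnegative, contradicting the first disjunct. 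In either case $i$ must be active.

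The main point requiring care — the expected obstacle — is the feasibility bookkeeping of the semi-stand-alone deviation: Lemma \ref{lemma:FirstActiveAgent} delivers a cost comparison valid \emph{regardless} of feasibility, whereas the equilibrium property only furnishes the cost inequality when the competing point is genuinely feasible, which is precisely why the first (infeasibility) alternative must be isolated as a separate branch. A secondary caveat is the degenerate case $C_{i,\Cin}=0$, where $\Delta_i=0$ and an agent can never be active, since constraint \eqref{eq:Inletoutlet_independent} then forces every $F_{k,i}=0$; I would assume $C_{i,\Cin}>0$ (equivalently $\Delta_i>0$) throughout, as is implicit for any agent that can participate actively.
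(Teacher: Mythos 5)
Your proof is correct and follows essentially the same route as the paper's: the candidate deviation $(z_i',F_i')=(M_i/C_{i,\Cout},0)$, the split into ``infeasible'' versus ``feasible but not profitable'', and the use of Lemma \ref{lemma:FirstActiveAgent} to convert nonprofitability of that deviation into $\sum_k F_{k,i}\geq \frac{c+\beta}{c-\gamma}\Delta_i$ reproduce the paper's argument for the forward implication exactly. The difference is one of rigor rather than strategy. The paper disposes of the converse by asserting the chain ``$i$ is active $\iff$ semi-stand-alone is infeasible or not profitable $\iff$ \eqref{eq:ConditionsActive-notSemiSTA}'', which read literally is shaky (a non-active agent already sits at the semi-stand-alone point, so deviating to it is trivially unprofitable, making the middle condition true while the left side is false); you actually prove this direction, observing that the second disjunct forces $\sum_k F_{k,i}>0$ outright, while the first disjunct is incompatible with the feasibility requirement $F_{i,0}(F)=M_i/C_{i,\Cout}-\sum_j F_{i,j}\geq 0$ of a non-active agent. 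Your closing caveat is also a genuine catch that the paper misses: when $C_{i,\Cin}=0$ one has $\Delta_i=0$, so the second disjunct of \eqref{eq:ConditionsActive-notSemiSTA} holds vacuously, yet constraint \eqref{eq:Inletoutlet_independent} prevents $i$ from ever being active, and the stated equivalence fails; this is not hypothetical, since enterprise 2 in the paper's own case study (Table \ref{Parameters}) has $C_{2,\Cin}=0$. The lemma should therefore carry the implicit hypothesis $C_{i,\Cin}>0$ (and $M_i>0$), exactly as you note.
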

	
	\begin{proof}
		Since $(z,F)\in \mathrm{Eq}(E_0,y_{\mathrm{null}})$ and $i$ is active, it means that $i$ has no interest in change to semi-stand-alone, either because $i$ in fact cannot pass to semi-stand-alone mode, or because $i$ can pass to semi-stand-alone but it is not profitable to do so.
		
		For $i$ to pass to semi-stand-alone, it will need to consider the operation $(z_i',F_i')$ given by
		\[
		F_{k,i}' = 0,\quad \forall k\in I\quad\text{ and }\quad z_i' = \frac{M_i}{C_{i,\Cout}}.
		\]
		With this in mind, we can analyse the reasons for $i$ not to go to semi-stand-alone.
		\begin{itemize}
			\item \textbf{semi-stand-alone is infeasible:} It is not hard to see that the semi-stand-alone operation $(z_i',F_i')$ is feasible for problem $P_i(z_{-i}, F_{-i},E)$ if and only if
			\[
			F_{i,0}(F_i',F_{-i}) = \frac{M_i}{C_{i,\Cout}} - \sum_{j} F_{i,j} \geq 0.
			\]
			Thus, if semi-stand-alone is infeasible, then \KC{$\frac{M_i}{C_{i,\Cout}} - \sum_{j} F_{i,j} < 0$}.
			
			\item  \textbf{semi-stand-alone is feasible but not profitable:} By equivalence \eqref{eq:EquivalenceDelta}, the nonprofitability of semi-stand-alone operation is equivalent to $\sum_{k} F_{k,i} \geq \frac{c+\beta}{c-\gamma}\Delta_i$.
		\end{itemize}
		By mixing both cases, we can write
		\begin{align*}
			i\text{ is active }&\iff \text{ semi-stand-alone is either infeasible or not profitable for }P_i(z_{-i},F_{-i},E)\\
			&\iff \frac{M_i}{C_{i,\Cout}} - \sum_{j} F_{i,j} < 0\quad\text{ or }\quad\sum_{k} F_{k,i} \geq \frac{c+\beta}{c-\gamma} \Delta_i.
		\end{align*}
		The proof is then finished.
	\end{proof}
	
	\subsection{Construction of the auxiliary optimization problem}
	
	Using the characterization of active agents in Lemma \ref{lemma:SecondActiveAgent}, we will develop an alternative single-level optimization problem  that, besides deciding the operation $(z,F)$, it ``decides'', for each agent $i\in I$, two more things:
	\begin{enumerate}
		\item It decides which of the two expressions of \eqref{eq:ExpressionF0-Piecewise} should be used to compute $F_{i,0}$.
		\item If $i$ is active, then it decides which of the two conditions in \eqref{eq:ConditionsActive-notSemiSTA} holds. 
	\end{enumerate}
	For this, we first introduce a boolean variable $y_{i,\act}\in \{0,1\}$ and define
	\begin{equation}\label{eq:newExpressionF0}
		\begin{aligned}
			F_{i,0}(y_{i,\act},F_{-i}) &= (1-y_{i,\act})\left( \frac{M_i}{C_{i,\Cout}}-\sum_{j} F_{i,j} \right)+y_{i,\act}\left(\frac{M_i}{C_{i,\Cout}-C_{i,\Cin}}-\sum_{j} F_{i,j}\right)\\
			&=\frac{M_i}{C_{i,\Cout}}(1-y_{i,\act})+\frac{M_i}{C_{i,\Cout}-C_{i,\Cin}}y_{i,\act}-\sum_{j} F_{i,j}
		\end{aligned}
	\end{equation}
	We will include the constraint that, whenever there exists $F_{k,i}>0$, then necessarily $y_{i,\act} = 1$. This is achieved by the constraint
	\begin{equation}\label{eq:ActiveConstraint}
		\sum_{k} F_{k,i} \leq Ky_{i,\act},
	\end{equation}
	for the constant $K>0$ given by \eqref{eq:ValueOfK}. Thus, if $i$ is in active situation, then $y_{i,\act} = 1$. Otherwise, we can choose the value of $y_{i,\act}$. We define $y_{\act} = (y_{i,\act}\ :\ i\in I)$. Observe that, since $\frac{M_i}{C_{i,\Cout}} \leq \frac{M_i}{C_{i,\Cout}-C_{i,\Cin}}$, it is always desirable to put $y_{i,\act} = 0$ whenever it is possible, when minimizing $F_0(F)$. Thus, the values of $y_{i,\act}$ will be consistent at any optimal configuration.
	
	Now, whenever $i$ is active, we need to decide which condition of \eqref{eq:ConditionsActive-notSemiSTA} will hold. We include a boolean variable $y_{i,\mathrm{pos}}\in \{0,1\}$, and the constraints
	\begin{align}
		&0 > \frac{M_i}{C_{i,\Cout}} - \sum_{j} F_{i,j} -  Ky_{i,\mathrm{pos}}\label{eq:FirstCaseActive}\\
		&0\leq \sum_{k} F_{k,i} - \frac{c+\beta}{c-\gamma} \Delta_i\cdot y_{i,\mathrm{pos}} +K(1-y_{i,\act})\label{eq:SecondCaseActive}
	\end{align}
	The above system works as follows. On the one hand, if $\frac{M_i}{C_{i,\Cout}} - \sum_{j} F_{i,j} \geq 0$, this forces $y_{i,\mathrm{pos}} = 1$. Then, if $y_{i,\act} = 0$, the constraint \eqref{eq:SecondCaseActive} becomes the profitability constraint: since semi-stand-alone is feasible, the active situation of agent $i$ must be better than semi-stand-alone. On the other hand, whenever  $\frac{M_i}{C_{i,\Cout}} - \sum_{j} F_{i,j} < 0$, we can choose $y_{i,\mathrm{pos}} = 0$ and thus constraint \eqref{eq:SecondCaseActive} is trivially verified by the positivity of fluxes. Finally, if $y_{i,\act} = 0$, then constraint \eqref{eq:SecondCaseActive} is trivially verified regardless the value of $y_{i,\mathrm{pos}}$.  We define $y_{\mathrm{pos}} = (y_{i,\pos}\ :\ i\in I)$.

	Let us denote $y= (y_{\mathrm{null}},y_{\act},y_{\mathrm{pos}})$ and by $y_i = (y_{i,\mathrm{null}}, y_{i,\act},y_{i,\pos})$. To construct the desired auxiliary problem, instead of considering $(z,F)\in \mathrm{Eq}(E,y_{\mathrm{null}})$, we will instead ask that the tuple $(y,z,F)$ to respect all the constraints of problems \eqref{eq:ProblemOfIndependentAgent_EIP} assuming $E = E^{\max}$, and to respect the new constraints \eqref{eq:FirstCaseActive} and \eqref{eq:SecondCaseActive}. To do so, we first define a new feasibility set
	\begin{equation}\label{eq:FeasibleEIP}
		\mathcal{S} = \left\{ (y,z,F)\ :\ \begin{array}{ll}
			\mbox{Equations
				\eqref{eq:watermass_balance}
				and \eqref{eq:Inletoutlet_independent} hold},\quad&\forall i\in I\\
			\sum_{j} F_{i,j} \leq K\cdot(1-y_{i,\mathrm{null}}),\qquad&\forall i\in I\\
			\sum_{k} F_{k,i} \leq K\cdot (1-y_{i,\mathrm{null}}),\qquad&\forall i\in I\\	 
			\sum_{k} F_{k,i} \leq Ky_{i,\act},\quad&\forall i\in I\\
			y_{i,\act},y_{i,\mathrm{pos}},y_{i,\mathrm{null}}\in \{0,1\},\quad&\forall i\in I\\
			z_{i}\geq 0,\quad&\forall i\in I\\
			F_i\geq 0,\quad&\forall i\in I\\
			F_{i,0}(y_{i,\act},F_{-i})\geq 0,\quad&\forall i\in I\\
			\sum_{k\in I} F_{k,i} - \frac{c+\beta}{c-\gamma} \Delta_i\cdot y_{i,\mathrm{pos}} +K(1-y_{i,\act})\geq 0,\quad&\forall i\in I
		\end{array} \right\}
	\end{equation}
	where $F_{i,0}(y_{i,\act},F_{-i})$ is given by equation \eqref{eq:newExpressionF0}. The first active condition given by \eqref{eq:FirstCaseActive} is left outside the set $\mathcal{S}$ since we will deal with it in the next section.
	
	With $\mathcal{S}$, the active condition \eqref{eq:FirstCaseActive} and the new cost constraint given by formula \eqref{eq:NewFormulaSTC}, we define the auxiliary Mixed-Integer optimization problem as
	\begin{equation}\label{eq:AuxiliaryProblem}
		\mathcal{A}_0 := \left\{\begin{array}{cl}
			\displaystyle\min_{y,z,F} &\displaystyle\sum_{i\in I} F_0(y_{i,\act},F_{-i})\\
			\\
			s.t&\begin{cases}
				(y,z,F)\in \mathcal{S},\\
				\frac{M_i}{C_{i,\Cout}} - \sum_{j} F_{i,j} -  Ky_{i,\mathrm{pos}}<0,\quad&\forall i \in I,\\
				\Cost_{i}(y_i,z_i,F_i,F_{-i}) \leq \KC{\STC_i(\alpha, y_{i,\mathrm{null}})},\quad&\forall i\in I.
			\end{cases}
		\end{array}\right.
	\end{equation}
	where $y_i = (y_{i,\mathrm{null}},y_{i,\act},y_{i,\mathrm{pos}})$, and the cost function $\Cost_{i}(y_i,z_i,F_i,F_{-i})$ is the same as in \eqref{eq:CostEIP} considering $E_0 = E_{\max}$, but with the expression \eqref{eq:newExpressionF0} for $F_0$. We finish this section by showing that problem $\mathcal{A}_0$ is in fact a relaxation of problem $\eqref{eq:ProblemOptDesign-NullClass}$.  
	
	\begin{lemma}\label{lemma:FeasiblePointsAreInAuxProblem} Let $(E,y_{\mathrm{null}},z,F)$ be a feasible point of Problem \eqref{eq:ProblemOptDesign-NullClass}, then there exists a pair $(y_{\act},y_{\mathrm{pos}})\in \{0,1\}^{2n}$ such that $(y,z,F)$ is feasible for $\mathcal{A}_0$.
	\end{lemma}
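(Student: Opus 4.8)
The statement to prove is that every feasible point $(E, y_{\mathrm{null}}, z, F)$ of Problem \eqref{eq:ProblemOptDesign-NullClass} can be extended to a feasible point $(y, z, F)$ of $\mathcal{A}_0$ by choosing suitable $(y_{\act}, y_{\mathrm{pos}})$. The idea is that $\mathcal{A}_0$ is a relaxation, so I need to *construct* the extra boolean variables and then *verify* that all the constraints of $\mathcal{S}$, the first active condition \eqref{eq:FirstCaseActive}, and the cost constraint hold. So let me think about how to construct these variables.

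The natural choice for $y_{i,\act}$ is dictated by the physical situation of agent $i$ under the operation $(z,F)$: set $y_{i,\act} = 1$ if $\sum_k F_{k,i} > 0$ (agent $i$ is active) and $y_{i,\act} = 0$ otherwise. Let me check this works. With this choice, constraint $\sum_k F_{k,i} \leq K y_{i,\act}$ in $\mathcal{S}$ holds because $K$ is the big-M bound from \eqref{eq:ValueOfK}. Crucially, I should check that $F_{i,0}(y_{i,\act}, F_{-i})$ defined by \eqref{eq:newExpressionF0} coincides with the true $F_{i,0}(F)$ from \eqref{eq:ExpressionF0-Piecewise}: when $\sum_k F_{k,i} > 0$ we have $y_{i,\act}=1$ and both use the formula with $C_{i,\Cout}-C_{i,\Cin}$; when $\sum_k F_{k,i} = 0$ we have $y_{i,\act}=0$ and both use the formula with $C_{i,\Cout}$. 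So the two expressions agree, meaning $F_{i,0}(y_{i,\act}, F_{-i}) \geq 0$ (feasibility in $\mathcal{S}$) and the cost constraint both transfer directly from the original feasibility.

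**The harder part: choosing $y_{i,\mathrm{pos}}$ and verifying the active conditions.** The main obstacle will be choosing $y_{i,\mathrm{pos}}$ so that both \eqref{eq:FirstCaseActive} and the last constraint of $\mathcal{S}$ (which is \eqref{eq:SecondCaseActive}) hold simultaneously, and this is where the equilibrium hypothesis enters through Lemma \ref{lemma:SecondActiveAgent}. I would split into cases. If $\tfrac{M_i}{C_{i,\Cout}} - \sum_j F_{i,j} < 0$, I choose $y_{i,\mathrm{pos}} = 0$; then \eqref{eq:FirstCaseActive} reads $\tfrac{M_i}{C_{i,\Cout}} - \sum_j F_{i,j} < 0$, which holds by assumption, and \eqref{eq:SecondCaseActive} holds by positivity of fluxes. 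If instead $\tfrac{M_i}{C_{i,\Cout}} - \sum_j F_{i,j} \geq 0$, I choose $y_{i,\mathrm{pos}} = 1$; then \eqref{eq:FirstCaseActive} reads $\tfrac{M_i}{C_{i,\Cout}} - \sum_j F_{i,j} - K < 0$, which holds since $K$ dominates this bounded quantity. The delicate check is \eqref{eq:SecondCaseActive} with $y_{i,\mathrm{pos}} = 1$: if $y_{i,\act} = 0$ it is trivial by the $+K$ term, but if $y_{i,\act} = 1$ (agent active), I must show $\sum_k F_{k,i} \geq \tfrac{c+\beta}{c-\gamma}\Delta_i$. This is precisely where I invoke that $(z,F)\in\mathrm{Eq}(E,y_{\mathrm{null}})$: since agent $i$ is active and semi-stand-alone is feasible (as $\tfrac{M_i}{C_{i,\Cout}} - \sum_j F_{i,j}\geq 0$), Lemma \ref{lemma:SecondActiveAgent} forces the second disjunct of \eqref{eq:ConditionsActive-notSemiSTA}, giving exactly the needed inequality.

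**Wrapping up.** The remaining constraints of $\mathcal{S}$ — the water mass balance \eqref{eq:watermass_balance}, the inlet/outlet concentration \eqref{eq:Inletoutlet_independent}, the two big-M null constraints, positivity of $z_i$ and $F_i$ — all transfer verbatim from feasibility of $(E, y_{\mathrm{null}}, z, F)$ in \eqref{eq:ProblemOptDesign-NullClass}, once one notes that setting $F_{k,i} = 0$ for $(k,i)\in E^c$ (which holds in the original problem) makes the $E^{\max}$-based constraints of $\mathcal{S}$ equivalent to the $E$-based ones. So the plan is: (i) set $y_{i,\act}$ by activity and confirm the $F_{i,0}$ expressions coincide; (ii) set $y_{i,\mathrm{pos}}$ by the sign of $\tfrac{M_i}{C_{i,\Cout}} - \sum_j F_{i,j}$; (iii) verify the two active-condition constraints case by case, using Lemma \ref{lemma:SecondActiveAgent} for the single nontrivial subcase; and (iv) note the transfer of all classical constraints. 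I expect step (iii), and specifically the invocation of the equilibrium property via Lemma \ref{lemma:SecondActiveAgent}, to be the crux of the argument.
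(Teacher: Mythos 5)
Your proposal is correct and follows essentially the same route as the paper's proof: set $y_{i,\act}=1$ exactly for the active agents, choose $y_{i,\mathrm{pos}}$ according to the sign of $\frac{M_i}{C_{i,\Cout}}-\sum_{j}F_{i,j}$, and invoke Lemma \ref{lemma:SecondActiveAgent} (via the equilibrium hypothesis) in the single nontrivial case of an active agent for which semi-stand-alone operation is feasible. Your write-up is in fact slightly more detailed than the paper's, which leaves implicit the coincidence of the two expressions for $F_{i,0}$ and the verbatim transfer of the remaining constraints.
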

	\begin{proof}
		We construct $y_{\act}$ and $y_{\mathrm{pos}}$ as follow:
		\begin{enumerate}
			\item If $i$ is in stand-alone operation, then the economic constraint forces $y_{i,\mathrm{null}} = 1$. We set $y_{i,\act} = 0$ and $y_{i,\mathrm{pos}} = 1$. Then, the operation of $i$ respects the stand-alone operation and all the constraints for active agents are trivially fulfilled for $i$.
			\item If $i$ is in semi-stand-alone operation, we set again  $y_{i,\act} = 0$ and $y_{i,\mathrm{pos}} = 1$, and the same conclusion follows.
			\item Finally, if $i$ is active, we set $y_{i,\act} = 1$. Then,
			\begin{itemize}
				\item if $\frac{M_i}{C_{i,\Cout}} - \sum_{j} F_{i,j}<0$, we set $y_{i,\mathrm{pos}} = 0$.
				\item if $\frac{M_i}{C_{i,\Cout}} - \sum_{j} F_{i,j}\geq 0$, we set $y_{i,\mathrm{pos}} = 1$. Lemma \ref{lemma:SecondActiveAgent} ensures that in this case the profitability constraint \eqref{eq:SecondCaseActive} is verified.
			\end{itemize}
		\end{enumerate}
		Clearly with this construction $(y,z,F)$ is feasible for $\mathcal{A}_0$, and so the proof is then finished.
	\end{proof}
	
	
	\subsection{Approximated solutions and Main results}\label{subsec:ApproxSolutions}
	
	The main difficulty with the auxiliary problem $\mathcal{A}_0$, is that the feasibility set 
	\begin{equation}\label{eq:FeasibilitySetA0}
		S_0 = \left\{ (y,z,F) \ :\ \begin{array}{ll}(y,z,F)\in \mathcal{S}\\
			\frac{M_i}{C_{i,\Cout}} - \sum_{j} F_{i,j} -  Ky_{i,\mathrm{pos}}<0,\quad&\forall i \in I\\
			\Cost_{i}(y_{i},z_i,F_i,F_{-i}) \leq \KC{\STC_i(\alpha, y_{i,\mathrm{null}})},\quad&\forall i\in I
		\end{array}
		\right\}
	\end{equation} 
	might not be closed, due to the strict constraint \eqref{eq:FirstCaseActive}. To avoid this issue, we will consider, for each $\varepsilon>0$, an approximated version given by
	\begin{equation}\label{eq:ApproximatedAuxiliaryProblem}
		\mathcal{A}_{\varepsilon} := \left\{\begin{array}{cl}
			\displaystyle\min_{y,z,F} &\displaystyle\sum_{i\in I} F_0(y_{i,\act},F_{-i})\\
			\\
			s.t&\begin{cases} (y,z,F)\in \mathcal{S},\\
				\frac{M_i}{C_{i,\Cout}} - \sum_{j} F_{i,j} -  Ky_{i,\mathrm{pos}}\leq-\varepsilon,\quad&\forall i \in I,\\
				\Cost_{i}(y_{i},z_i,F_i,F_{-i}) \leq \KC{\STC_i(\alpha, y_{i,\mathrm{null}})},\quad&\forall i\in I.
			\end{cases}
		\end{array}\right.
	\end{equation}
	where the feasibility set $\mathcal{A}_{\varepsilon}$ is now given by
	\begin{equation}\label{eq:FeasibilitySetAEpsilon}
		S_{\varepsilon} = \left\{ (y,z,F) \ :\ \begin{array}{ll}(y,z,F)\in \mathcal{S}\\
			\frac{M_i}{C_{i,\Cout}} - \sum_{j} F_{i,j} -  Ky_{i,\mathrm{pos}}\leq-\varepsilon,\quad&\forall i \in I\\
			\Cost_{i}(y_{i},z_i,F_i,F_{-i}) \leq \KC{\STC_i(\alpha, y_{i,\mathrm{null}})},\quad&\forall i\in I
		\end{array}
		\right\}
	\end{equation}
	Clearly, $S_{\varepsilon}$ is a closed set and 
	\[
	S_0  = \bigcup_{\varepsilon>0}S_{\varepsilon}.
	\]
	The interpretation of $S_{\varepsilon}$ is that, whenever the semi-stand-alone operation is feasible for an active agent $i\in I$ with an error of $\varepsilon$, then the profitability condition \eqref{eq:SecondCaseActive} must hold with $y_{i,\mathrm{pos}} = 1$.
	
	\begin{thm}\label{thm:MainReformulationTheorem} For $\varepsilon>0$, let $(y,z,F)$ be a solution of problem $\mathcal{A}_{\varepsilon}$. Then, there \KC{exists} $E\in \mathcal{E}$ such that $(E,y_{\mathrm{null}},z,F)$ is feasible for Problem \eqref{eq:ProblemOptDesign-NullClass}.
	\end{thm}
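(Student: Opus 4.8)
The plan is to take an optimal solution $(y,z,F)$ of $\mathcal{A}_{\varepsilon}$, reconstruct a network $E$ from the support of the flux vector, and verify the constraints of \eqref{eq:ProblemOptDesign-NullClass} in two blocks: the structural and individual-rationality constraints, which will transfer almost verbatim from the definition of $S_{\varepsilon}$, and the equilibrium membership $(z,F)\in\mathrm{Eq}(E,y_{\mathrm{null}})$, where the whole difficulty lies.

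First I would set $E:=E^{\rm st}\cup\{(k,i)\in E^{\max}:F_{k,i}>0\}$ and note that $E\in\mathcal{E}$ because $E^{\rm st}\subset E\subset E^{\max}$. With this choice the null-flux condition \eqref{eq:NullFluxes} holds by construction, while the water balance \eqref{eq:watermass_balance}, the concentration bound \eqref{eq:Inletoutlet_independent}, positivity, the big-$M$ bounds \eqref{eq:StandAloneNoOutlet}--\eqref{eq:StandAloneNoInlet}, and the individual-rationality inequalities are all read off directly from $(y,z,F)\in S_{\varepsilon}$. I would then check that $y_{i,\act}$ matches the real activity status: constraint \eqref{eq:ActiveConstraint} forces $y_{i,\act}=1$ whenever $\sum_k F_{k,i}>0$, and minimality of the objective together with $M_i/C_{i,\Cout}\le M_i/(C_{i,\Cout}-C_{i,\Cin})$ forces $y_{i,\act}=0$ otherwise, so the auxiliary expression \eqref{eq:newExpressionF0} agrees with the true piecewise discharge \eqref{eq:ExpressionF0-Piecewise} at every node.

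The core step is to prove, for each $i$ with $y_{i,\mathrm{null}}=0$, that $(z_i,F_i)$ solves $P_i(z_{-i},F_{-i},E)$ in the sense of \eqref{eq:DefEquilibria}. Feasibility being already secured, I must exclude two kinds of profitable deviation. For the discrete one (an active agent collapsing to semi-stand-alone) I would invoke Lemma \ref{lemma:SecondActiveAgent}: through the selector $y_{i,\pos}$, the pair \eqref{eq:FirstCaseActive}--\eqref{eq:SecondCaseActive} encodes exactly the disjunction $M_i/C_{i,\Cout}-\sum_j F_{i,j}<0$ or $\sum_k F_{k,i}\ge\tfrac{c+\beta}{c-\gamma}\Delta_i$, which by that lemma (resting on Lemma \ref{lemma:FirstActiveAgent}) says precisely that semi-stand-alone is infeasible or unprofitable for $i$. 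For the continuous deviation I would use that, for an active agent, the water balance fixes the throughput at $M_i/(C_{i,\Cout}-C_{i,\Cin})$, so $\Cost_i$ collapses, up to constants, to $-(c-\gamma)\sum_k F_{k,i}$; since $c>\gamma$, minimizing $i$'s cost amounts to maximizing $\sum_k F_{k,i}$, i.e.\ to minimizing $z_i$, which is one summand of the designer's objective. Thus any inlet increase that is admissible for $i$ and preserves global feasibility would strictly lower $\sum_i z_i$, contradicting optimality of $(y,z,F)$.

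I expect the continuous deviation to be the real obstacle. The asymmetry is that $P_i$ enforces only agent $i$'s own discharge bound $F_{i,0}\ge0$, which for an active agent constrains the fixed outlets and is blind to $F_i$, whereas $\mathcal{S}$ simultaneously imposes $F_{k,0}\ge0$ at every source $k$. An inlet increase that over-draws a source already at $F_{k,0}=0$ is therefore feasible for $P_i$ but infeasible for $\mathcal{A}_{\varepsilon}$, so the optimality argument above does not close by itself. The decisive point I would have to nail down is that at an optimum this situation cannot occur: every inlet increase admissible in $P_i$ is already blocked by agent $i$'s own constraints---either the concentration inequality \eqref{eq:Inletoutlet_independent} or $z_i\ge0$---so that the designer's socially optimal allocation already places each active agent at its individually optimal inlet. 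The strict margin $\varepsilon$ enters here in a supporting role: it is exactly what makes $S_{\varepsilon}$ closed and thus guarantees that a minimizer of $\mathcal{A}_{\varepsilon}$ is attained, legitimizing the contradiction-with-optimality step.
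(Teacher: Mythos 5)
Your plan follows essentially the same route as the paper's own proof: construct $E$ from the optimal point (the paper deletes all connections of null agents and all inlet connections of semi-stand-alone agents rather than taking the support of $F$, but either choice serves), observe that the structural and rationality constraints transfer from $S_{\varepsilon}$, eliminate the collapse-to-semi-stand-alone deviation via Lemma \ref{lemma:SecondActiveAgent}, and rule out the remaining active-to-active deviations by playing the designer's optimality against the identity $\Cost_i(z_i',F_i',F_{-i},E)-\Cost_i(z_i,F_i,F_{-i},E)=(c-\gamma)(z_i'-z_i)$. Concretely, the paper argues that a profitable deviation $(z_i',F_i')$ of an active agent must itself be active, asserts that the deviated point $(y,z_i',z_{-i},F_i',F_{-i})$ is \emph{again feasible for} $S_{\varepsilon}$, deduces $z_i'\geq z_i$ from optimality of $(y,z,F)$, and concludes with the identity above.

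The obstacle you isolate in your last paragraph is a genuine gap, and it is precisely the step the paper's proof disposes of with the unjustified phrase ``Since everything else is also verified''. It is not verified: when agent $i$ raises an inlet $F_{k,i}$, each source's discharge $F_{k,0}$ falls by the same amount (and when $i$ lowers it, $\Cost_k$ rises by $(\beta-\gamma)$ per unit), so the deviated point can exit $\mathcal{S}$ through the constraints $F_{k,0}\geq 0$ or $\Cost_k\leq \STC_k(\alpha,y_{k,\mathrm{null}})$, which live in $\mathcal{S}$ but not in $P_i$. Unfortunately the patch you propose --- that at an optimum every inlet increase admissible in $P_i$ is already blocked by \eqref{eq:Inletoutlet_independent} or by $z_i\geq 0$ --- is false, so the proof cannot be closed along that line. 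Take two agents with the paper's prices and $\alpha=0.9995$: agent $1$ with $C_{1,\Cin}=0$, $C_{1,\Cout}=100$, $M_1=1950$ (a pure source of capacity $19.5$), and agent $2$ with $C_{2,\Cin}=50$, $C_{2,\Cout}=300$, $M_2=10000$ (active throughput $40$, concentration cap $F_{1,2}\leq 20$, activity threshold $\tfrac{c+\beta}{c-\gamma}\Delta_2\approx 19.44$). The optimum of $\mathcal{A}_{\varepsilon}$ is attained only at $F_{1,2}=19.5$, $z_1=19.5$, $z_2=20.5$, $F_{1,0}=0$: the source is dry while agent $2$ keeps strict slack in both of its own constraints. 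Since $P_2$ contains no constraint on $F_{1,0}$, the move $F_{1,2}'=20$, $z_2'=20$ is feasible for $P_2(z_{-2},F_{-2},E)$ for every $E$ containing $(1,2)$ (and $(1,2)\in E$ is forced by $F_{1,2}>0$), and it is cheaper by $(c-\gamma)\cdot 0.5$. Hence no valid $E$ turns this optimum into an equilibrium: the difficulty you flagged is not a defect of your write-up but of the statement itself as the followers' problems are formulated, and closing it requires a modification (for instance, adding the coupling constraints $F_{k,0}(F)\geq 0$, $k\neq i$, to each $P_i$, after which both your argument and the paper's go through) rather than a sharper estimate. A smaller remark of the same nature: your one-line claim that optimality forces $y_{i,\act}=0$ for every semi-stand-alone agent also fails in corner cases (an agent with $\sum_k F_{k,i}=0$, $y_{i,\act}=1$, $y_{i,\pos}=0$ and $\sum_j F_{i,j}\geq M_i/C_{i,\Cout}+\varepsilon$ can be feasible, and can tie at the optimum), so that consistency step needs more than the monotonicity of the two branches of \eqref{eq:newExpressionF0}.
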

	\begin{proof}
		
		We only need to show that there exists $E\in \mathcal{E}$ such that $(z,F)\in \mathrm{Eq}(E,y_{\mathrm{null}})$. To construct $E$, we start with $E = E^{\max}$, and then, for every $i\in I$, we do the following:
		\begin{enumerate}
			\item If $y_{i,\mathrm{null}} = 0$ (that is, $i$ is in stand-alone operation), we update
			\[
			E \leftarrow E \setminus\{(k,j)\in E^{\max}\ :\ k=i\text{ or } j=i  \}.
			\]
			\item If $\sum_k F_{k,i} = 0$, ( that is, $i$ is in stand-alone operation), we update
			\[
			E \leftarrow E \setminus\{(k,i) :\ k\in I\setminus\{ i\}  \}.
			\]
		\end{enumerate}
		We will show that this $E$ (which eliminates all connections for stand-alone agents, and all inlet connections for semi-stand-alone agents) verifies the inclusion $(z,F)\in \mathrm{Eq}(E,y_{\mathrm{null}})$. Reasoning by absurd, let us assume that there exists $i\in I$ and $(z_i',F_i')\neq (z_i,F_i)$ such that
		\begin{itemize}
			\item $(z_i',F_i')$ is feasible for $P_i(z_{-i},F_{-i},E)$, and
			\item $(z_i',F_i')$ is profitable for $i$, that is, $\Cost_i(z_i',F_i',F_{-i},E) < \Cost_i(z_i,F_i,F_{-i},E)$.
		\end{itemize}
		Now, if agent $i$ was either in stand-alone or semi-stand-alone situation, the construction of $E$ yields that there is \KC{no}  inlet connection between another enterprise and $i$. Thus, $F_{k,i} = 0 = F'_{k,i}$ for all $k\in I\setminus \{i\}$. Furthermore, the water balance constraint implies that 
		\[
		z_i = \frac{M_i}{C_{i,\Cout}} = z_i'.
		\]
		This would yield a contradiction with the fact that $(z'_i,F'_i)\neq (z_i,F_i)$. Thus, we have necessarily that $i$ is active, that is, $\sum_{k}F_{k,i} > 0$.
		
		Now, since $(z,F)\in S_{\varepsilon}$, then either
		\[
		\frac{M_i}{C_{i,\Cout}} - \sum_{j} F_{i,j} \leq -\varepsilon\quad\text{ or }\quad\sum_{k} F_{k,i} \geq \frac{c+\beta}{c-\gamma} \Delta_i.
		\]
		This means that either the semi-stand-alone operation is infeasible for $P_i(z_{-i},F_{-i},E)$ or it is simply not profitable for $i$. Thus, by the assumption of profitability of $(z_i',F_i')$, we conclude that  necessarily $\sum_k F_{k,i}' >0$. 
		
		We claim now that $(y,z_i',z_{-i},F_i',F_i)$ is also a feasible point of $S_{\varepsilon}$. To show this, it is enough to prove that
		\[
		\frac{M_i}{C_{i,\Cout}} - \sum_{j} F_{i,j} \leq -\varepsilon\quad\text{ or }\quad\sum_{k} F'_{k,i} \geq \frac{c+\beta}{c-\gamma} \Delta_i.
		\]
		If $\frac{M_i}{C_{i,\Cout}} - \sum_{j} F_{i,j} \leq -\varepsilon$ our claim verifies trivially, so let us assume that $\frac{M_i}{C_{i,\Cout}} - \sum_{j} F_{i,j} > -\varepsilon$.  If $\sum_{k} F'_{k,i} < \frac{c+\beta}{c-\gamma} \Delta_i$, this means, by Lemma \ref{lemma:FirstActiveAgent}, that $\Cost_i(z'_i,F_i',F_{-i},E) < \Cost_{i}(M_i/C_{i,\Cout},0,F_{-i},E)$, where the latter is the cost associated to the semi-stand-alone operation (regardless if it is feasible or not). Then, we can write
		\[
		\Cost_i(z'_i,F_i',F_{-i},E) < \Cost_{i}(M_i/C_{i,\Cout},0,F_{-i},E) \leq \Cost_i(z_i,F_i,F_{-i},E),
		\]
		where the second inequality follows from Lemma \ref{lemma:FirstActiveAgent} and the fact that $\sum_{k} F'_{k,i} \geq \frac{c+\beta}{c-\gamma} \Delta_i$. This contradicts the profitability assumption of $(z_i',F_i')$. Thus, 
		$\sum_{k} F'_{k,i} \geq \frac{c+\beta}{c-\gamma} \Delta_i$. Thus, it is not hard to see that keeping $y_{i,\act}$ and $y_{i,\pos}$ with the same values,  we get that $(y,z_i',z_{-i},F_i',F_i)$  verifies the constraints \eqref{eq:FirstCaseActive} and \eqref{eq:SecondCaseActive}. Since everything else is also verified, we conclude that $(y,z_i',z_{-i},F_i',F_i)$ is a feasible point of $S_{\varepsilon}$, as claimed.
		
		Now, since $(y,z,F)$ is an optimal solution of $\mathcal{A}_{\varepsilon}$, we get that
		\begin{align*}
			z_i' - z_i &= \sum_{i\in I} z_i - \left(z_i' + \sum_{k\neq i} z_i \right)=F_0(F) - F_0(F_i',F_{-i})\geq 0,
		\end{align*}
		where the second equality follows from the aggregated water balance constraint. Thus, noting that
		\[
		F_{i,0}(F) = \frac{M_i}{C_{i,\Cout} - C_{i,\Cin}} - \sum_j F_{i,j} = F_{i,0}(F_i',F_{-i}),
		\]
		we deduce, using the water balance constraint \eqref{eq:watermass_balance} for agent $i$, that
		\[
		z_i'	+ \sum_k F_{k,i}' =  F_{i,0}(F_i',F_{-i}) + \sum_{j}F_{i,j} =  F_{i,0}(F) + \sum_{j}F_{i,j}  = z_i	+ \sum_k F_{k,i}.
		\]
		This yields that $ \sum_k F_{k,i}' -  \sum_k F_{k,i} = z_i - z_i'$. Thus, we can write
		\begin{align*}
			&\Cost_{i}(z_i',F_i',F_{-i},E) - \Cost_{i}(z_i,F_i,F_{-i},E)\\
			=\,\,& c(z_i' - z_i) - \gamma\left(\sum_k F_{k,i}' -  \sum_k F_{k,i}\right) + \beta\left(  F_{i,0}(F_i',F_{-i}) -  F_{i,0}(F) \right)\\
			=\,\,& (c-\gamma)(z_i'-z_i)\geq 0.
		\end{align*}
		This is a contradiction with the profitability of $(z_i', F_i')$. Thus, such a deviation $(z_i',F_i')$ can not exists, and we conclude that $(z,F)\in \mathrm{Eq}(E,y_{\mathrm{null}})$. This finishes the proof.
	\end{proof}
	
	The above theorem links the optimal solutions of the problems $\mathcal{A}_{\varepsilon}$ with the designer's problem \eqref{eq:ProblemOptDesign-NullClass}. By mixing the above theorem with Lemma \ref{lemma:FeasiblePointsAreInAuxProblem}, the next theorem shows that in fact the solutions of $(\mathcal{A}_{\varepsilon}\ :\ \varepsilon>0)$ approximate the optimal value of \eqref{eq:ProblemOptDesign-NullClass}, which coincides with the optimal value of $\mathcal{A}_0$.
	
	\begin{thm}\label{thm:ConvergenceOfOptimalValues}
		Let us define
		\begin{align*}
			Z &:= \text{Optimal value of Problem  \eqref{eq:ProblemOptDesign-NullClass}}\\
			Z_0 &:=	\text{Optimal value of Problem }\mathcal{A}_0\\
			Z_\varepsilon &:=	\text{Optimal value of Problem }\mathcal{A}_\varepsilon
		\end{align*}
		We have that $Z = Z_0$, that the function $\varepsilon\in (0,+\infty)\to Z_{\varepsilon}$ is non-decreasing, and that
		\[
		Z_{\varepsilon}\xrightarrow{\varepsilon\to 0}Z_0.
		\]
	\end{thm}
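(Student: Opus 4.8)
The plan is to establish the three assertions simultaneously by sandwiching $Z$ between $Z_0$ and the family $(Z_\varepsilon)_{\varepsilon>0}$, and then extracting both $Z=Z_0$ and the convergence from this sandwich together with a short monotonicity argument.

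First I would prove the two inequalities $Z_0\le Z$ and $Z\le Z_\varepsilon$ for every $\varepsilon>0$. The inequality $Z_0\le Z$ follows directly from Lemma \ref{lemma:FeasiblePointsAreInAuxProblem}: every feasible point of \eqref{eq:ProblemOptDesign-NullClass} yields a feasible point of $\mathcal{A}_0$, and I would check that the construction there sets $y_{i,\act}$ equal to the true active status of $i$, so that $F_{i,0}(y_{i,\act},F_{-i})$ from \eqref{eq:newExpressionF0} coincides with the true $F_{i,0}(F)$ of \eqref{eq:ExpressionF0-Piecewise}; hence the objective is preserved and the infimum over the larger feasible region of $\mathcal{A}_0$ can only be smaller. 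For the reverse direction I would invoke Theorem \ref{thm:MainReformulationTheorem}: taking an optimal solution of $\mathcal{A}_\varepsilon$ (whose existence I address below), it maps to a point feasible for \eqref{eq:ProblemOptDesign-NullClass}; again I would verify objective preservation, this time using that at any \emph{optimizer} of $\mathcal{A}_\varepsilon$ the variable $y_{i,\act}$ is consistent. Indeed, for a non-active agent one must have $y_{i,\act}=0$, since $M_i/C_{i,\Cout}\le M_i/(C_{i,\Cout}-C_{i,\Cin})$ makes $y_{i,\act}=0$ strictly cheaper while constraints \eqref{eq:ActiveConstraint} and \eqref{eq:SecondCaseActive} only relax under $y_{i,\act}=0$; for an active agent \eqref{eq:ActiveConstraint} forces $y_{i,\act}=1$. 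This yields $Z\le Z_\varepsilon$ for all $\varepsilon>0$.

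Next I would record the monotonicity of $\varepsilon\mapsto Z_\varepsilon$, which is immediate from the nesting of feasible sets: if $\varepsilon_1\le\varepsilon_2$ then $-\varepsilon_1\ge-\varepsilon_2$ shows $S_{\varepsilon_2}\subseteq S_{\varepsilon_1}$, so minimizing over the larger set gives $Z_{\varepsilon_1}\le Z_{\varepsilon_2}$. For the convergence $Z_\varepsilon\to Z_0$, I would combine $S_\varepsilon\subseteq S_0$ (which gives $Z_\varepsilon\ge Z_0$) with the identity $S_0=\bigcup_{\varepsilon>0}S_\varepsilon$: given any point of $S_0$ with objective below $Z_0+\eta$, it already lies in some $S_{\varepsilon_0}$, hence in $S_\varepsilon$ for every $\varepsilon\le\varepsilon_0$, so $Z_\varepsilon\le Z_0+\eta$ for all small $\varepsilon$; letting $\eta\downarrow 0$ gives $\limsup_{\varepsilon\to 0}Z_\varepsilon\le Z_0$, and with $Z_\varepsilon\ge Z_0$ this forces $Z_\varepsilon\to Z_0$. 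Finally, combining the sandwich $Z_0\le Z\le Z_\varepsilon$ with $Z_\varepsilon\to Z_0$ yields $Z=Z_0$.

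The step I expect to be the main obstacle is the existence of an optimal solution of $\mathcal{A}_\varepsilon$, which Theorem \ref{thm:MainReformulationTheorem} tacitly requires. Since $S_\varepsilon$ is closed (as noted in the text) and always nonempty (it contains the stand-alone configuration), I would close the argument by showing the feasible region is bounded: the fluxes $\sum_j F_{i,j}$ and $\sum_k F_{k,i}$ are bounded by $K$, and the cost constraint $\Cost_i\le\STC_i(\alpha,y_{i,\mathrm{null}})$ together with nonnegativity of all terms bounds each $z_i$, so $S_\varepsilon$ is compact and the continuous objective attains its minimum. The secondary delicate point is the objective-preservation bookkeeping for $y_{i,\act}$ described above, which must be argued at an optimizer rather than at an arbitrary feasible point.
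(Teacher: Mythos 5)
Your proof takes essentially the same route as the paper's: $Z_0 \le Z$ via Lemma \ref{lemma:FeasiblePointsAreInAuxProblem}, $Z \le Z_\varepsilon$ via Theorem \ref{thm:MainReformulationTheorem}, monotonicity of $\varepsilon \mapsto Z_\varepsilon$ from the nesting $S_{\varepsilon'} \subseteq S_\varepsilon$, convergence from $S_0 = \bigcup_{\varepsilon>0} S_\varepsilon$, and finally the sandwich $Z_0 \le Z \le Z_\varepsilon \to Z_0$. The extra care you take on two points the paper leaves tacit --- the consistency of $y_{i,\act}$ so that objective values are preserved under the correspondence, and the attainment of the minimum of $\mathcal{A}_\varepsilon$ via compactness of $S_\varepsilon$ (closed, bounded by the big-$M$ and cost constraints, nonempty via the stand-alone point) --- is sound and fills genuine gaps, but it does not change the structure of the argument.
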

	\begin{proof}
		Lemma \ref{lemma:FeasiblePointsAreInAuxProblem} ensures that $Z_0\leq Z$ since, after the transformation given in the proof of the lemma, every feasible point of Problem \eqref{eq:ProblemOptDesign-NullClass} is also a feasible point of $\mathcal{A}_0$. 
		
		Now, by construction,  we have that the family of feasibility sets $(S_{\varepsilon}\ :\ \varepsilon>0)$ is non-increasing with respect to $\varepsilon\in (0,+\infty)$: that is,
		\[
		\varepsilon' > \varepsilon \implies S_{\varepsilon'} \subseteq S_{\varepsilon}.
		\]  
		This yields that the function $\varepsilon\in (0,+\infty)\to Z_{\varepsilon}$ is non-decreasing, since  $Z_{\varepsilon} = \inf\{ F_0(F)\ :\ (y,z,F)\in S_{\varepsilon} \}$. Indeed, the infimum over a smaller set is bigger, and thus
		\[
		\varepsilon' > \varepsilon \implies  \KC{Z_{\varepsilon'} \geq Z_{\varepsilon}}.
		\]
		The convergence $Z_{\varepsilon}\xrightarrow{\varepsilon\to 0}Z_0$ follows immediately from the fact that $S_0 = \bigcup_{\varepsilon>0} S_{\varepsilon}$. 
		
		Finally, let $\varepsilon>0$. Since Theorem \ref{thm:MainReformulationTheorem} ensures that every solution $(y^*,z^*,F^*)$ of $\mathcal{A}_{\varepsilon}$ induces a feasible point of Problem \eqref{eq:ProblemOptDesign-NullClass} with the same value for the objective function, we get that
		\[
		Z \leq F_0(F^*) = Z_{\varepsilon}.
		\]
		Thus, $Z_0\leq Z\leq Z_{\varepsilon}$ for every $\varepsilon>0$, which implies that $Z= Z_0$. The proof is finished.
	\end{proof}

	\DS{
		The last corollary ensures that we can find the solution of Problem \eqref{eq:ProblemOptDesign-NullClass}, whenever it exists, by solving $\mathcal{A}_{\varepsilon}$ for $\varepsilon>0$ small enough.
		
		\begin{cor}\label{cor:ExactSolution}
			Problem \eqref{eq:ProblemOptDesign-NullClass} admits a solution if and only if there exists $\varepsilon^*>0$ such that
			\[
			Z_{\varepsilon} = Z_0 = Z_{\varepsilon^*},\quad \forall \varepsilon\in (0,\varepsilon^*].
			\]
			In such a case, any solution $(y^*,z^*,F^*)$ of $\mathcal{A}_{\varepsilon^*}$ is a solution of $\mathcal{A}_0$, and it is also a solution of Problem \eqref{eq:ProblemOptDesign-NullClass}: that is, there exists $E^*\in \mathcal{E}$ such that $(E^*,y_{\mathrm{null}}^*,z^*,F^*)$ is a solution of Problem \eqref{eq:ProblemOptDesign-NullClass}.
		\end{cor}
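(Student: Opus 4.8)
The plan is to assemble the corollary directly from Theorem \ref{thm:MainReformulationTheorem}, Theorem \ref{thm:ConvergenceOfOptimalValues} and Lemma \ref{lemma:FeasiblePointsAreInAuxProblem}, together with the two structural facts already recorded: the family $(S_\varepsilon)_{\varepsilon>0}$ is non-increasing in $\varepsilon$, and $S_0 = \bigcup_{\varepsilon>0} S_\varepsilon$. The guiding identity throughout is $Z = Z_0$, furnished by Theorem \ref{thm:ConvergenceOfOptimalValues}, so that all three optimal values can be compared on the same footing.

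For the implication that the existence of $\varepsilon^*$ forces solvability of \eqref{eq:ProblemOptDesign-NullClass}, I would first observe that $\mathcal{A}_{\varepsilon^*}$ actually attains its infimum. The set $S_{\varepsilon^*}$ is closed (as noted after \eqref{eq:FeasibilitySetAEpsilon}) and bounded, since every flux is controlled by the big-$M$ bounds in $\mathcal{S}$ and each $z_i$ is pinned down by the water-balance constraint; with the integer vector $y$ ranging over a finite set, the objective is continuous on this compact set, so a minimizer $(y^*,z^*,F^*)$ exists. Applying Theorem \ref{thm:MainReformulationTheorem} to it produces $E^*\in\mathcal{E}$ with $(E^*,y_{\mathrm{null}}^*,z^*,F^*)$ feasible for \eqref{eq:ProblemOptDesign-NullClass}, carrying objective value $F_0(F^*) = Z_{\varepsilon^*} = Z_0 = Z$. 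Since $Z$ is the optimal value of \eqref{eq:ProblemOptDesign-NullClass}, the point is optimal, which simultaneously proves solvability and the ``in such a case'' assertion: $(y^*,z^*,F^*)\in S_{\varepsilon^*}\subseteq S_0$ with value $Z_0$ makes it a solution of $\mathcal{A}_0$ as well.

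For the converse, I would start from a solution $(E,y_{\mathrm{null}},z,F)$ of \eqref{eq:ProblemOptDesign-NullClass} and feed it through Lemma \ref{lemma:FeasiblePointsAreInAuxProblem} to obtain $(y,z,F)\in S_0$ with $F_0(F)=Z=Z_0$, i.e.\ a minimizer of $\mathcal{A}_0$. The key quantitative step is to convert its membership in $S_0$ --- which only yields the strict inequalities \eqref{eq:FirstCaseActive} for each $i$ --- into membership in some $S_{\varepsilon^*}$. Because $I$ is finite, I would set $\varepsilon^* := \min_{i\in I}\bigl(\sum_j F_{i,j} + K y_{i,\mathrm{pos}} - M_i/C_{i,\Cout}\bigr) > 0$, so that $(y,z,F)\in S_{\varepsilon^*}$ and hence $Z_{\varepsilon^*}\leq F_0(F)=Z_0$. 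Combined with $Z_0\leq Z_\varepsilon$ for every $\varepsilon$ (the infimum over the smaller set $S_\varepsilon\subseteq S_0$), this gives $Z_{\varepsilon^*}=Z_0$, and the monotonicity of $\varepsilon\mapsto Z_\varepsilon$ from Theorem \ref{thm:ConvergenceOfOptimalValues} propagates the equality to the whole interval, yielding $Z_\varepsilon=Z_0=Z_{\varepsilon^*}$ for all $\varepsilon\in(0,\varepsilon^*]$.

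I expect the routine backbone to be the two applications of the previous results; the only genuinely delicate points are the compactness/attainment argument needed to guarantee that $\mathcal{A}_{\varepsilon^*}$ has a solution to which Theorem \ref{thm:MainReformulationTheorem} can be applied, and the finiteness argument extracting a uniform $\varepsilon^*$ from the strict constraints defining $S_0$. Neither is hard, but both must be stated explicitly, since the whole equivalence hinges on the passage between the open condition cutting out $S_0$ and the closed conditions cutting out the $S_\varepsilon$.
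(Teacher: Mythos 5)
Your proof is correct and follows essentially the same route as the paper: both directions rest on Lemma \ref{lemma:FeasiblePointsAreInAuxProblem}, Theorem \ref{thm:MainReformulationTheorem}, the identity $Z=Z_0$ and the monotonicity of $\varepsilon\mapsto Z_\varepsilon$ from Theorem \ref{thm:ConvergenceOfOptimalValues}, together with $S_0=\bigcup_{\varepsilon>0}S_\varepsilon$. The only differences are refinements on your side: you make explicit the compactness/attainment argument guaranteeing that $\mathcal{A}_{\varepsilon^*}$ has a solution (which the paper tacitly assumes when it writes ``let $(y^*,z^*,F^*)$ be a solution of $\mathcal{A}_{\varepsilon^*}$''), and you exhibit $\varepsilon^*$ as an explicit minimum over the finite index set $I$, where the paper extracts it abstractly from the union $S_0=\bigcup_{\varepsilon>0}S_\varepsilon$.
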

		
		\begin{proof}
			Let us show first the necessity. Let $(E^*,y_{\mathrm{null}}^*,z^*,F^*)$ be a solution of Problem \eqref{eq:ProblemOptDesign-NullClass}, and let $(y^*,z^*,F^*)$ its associated feasible point in $S_0$. Since $S_0 = \bigcup_{\varepsilon>0}S_{\varepsilon}$ we get that there exists $\varepsilon^*>0$ such that $(y^*,z^*,F^*)\in \mathcal{A}_{\varepsilon^*}$. Furthermore, the family  $(S_{\varepsilon}\ :\ \varepsilon>0)$ is non-increasing with respect to $\varepsilon\in (0,+\infty)$, we get that $(y^*,z^*,F^*)\in S_{\varepsilon}$ for every $\varepsilon>0$.
			
			Since $(E^*,y_{\mathrm{null}}^*,z^*,F^*)$ is optimal, we get that $F_0(F^*) = Z = Z_0$. Thus,  $(y^*,z^*,F^*)$ is also optimal for $\mathcal{A}_0$. Finally, for every $\varepsilon\in (0,\varepsilon^*]$, we get that
			\[
			(y^*,z^*,F^*)\in S_{\varepsilon}\implies Z_{\varepsilon}\leq F_0(F^*) = Z_0\leq Z_{\varepsilon}.
			\]
			Thus, $Z_{\varepsilon} = Z_0 = Z_{\varepsilon^*}$ for every $\varepsilon\in (0,\varepsilon^*]$. The rest of the proof follows directly form Theorem \ref{thm:MainReformulationTheorem}.
			
			Now, for the sufficiency, let $\varepsilon^*>0$ such that $Z_0 = Z_{\varepsilon^*}$, and let $(y^*,z^*,F^*)$ be a solution of $\mathcal{A}_{\varepsilon^*}$. Then, since $(y^*,z^*,F^*)\in S_{\varepsilon^*}\subset S_0$, we deduce that $(y^*,z^*,F^*)$ is a solution of $A_{0}$. Let $E^* \in \mathcal{E}$ given by
			\[
			E^* = \{ (i,j)\ :\ i,j\in I,\, i\neq j,\, F_{i,j}>0  \}\cup E^{st}.
			\]
			Then, it is not hard to see that $(E^*,y_{\mathrm{null}}^*,z^*,F^*)$ is feasible for Problem \eqref{eq:ProblemOptDesign-NullClass}, and Lemma \ref{lemma:FeasiblePointsAreInAuxProblem} entails that is is also optimal. The proof is then completed.
		\end{proof}
		
	}
	
	
	\subsection{Numerical Implementation}\label{sec:Num_Imp}
	Based on the developments of this section, our aim is to solve approximately $\mathcal{A}_0$ by solving $\mathcal{A}_{\varepsilon}$ for $\varepsilon>0$ small enough. However, this strategy has two drawbacks: first, we don't really know how small $\varepsilon$ should be, and second, we are limited by the numerical precision of our computer. To try to measure how far a solution of $\mathcal{A}_\varepsilon$ is from a solution $\mathcal{A}_{0}$, we will use an optimality gap. Consider the problem 
	
	\begin{equation}\label{eq:GapAuxiliaryProblem}
		\overline{\mathcal{A}} := \left\{\begin{array}{cl}
			\displaystyle\min_{y,z,F} &\displaystyle\sum_{i\in I} F_0(y_{i,\act},F_{-i})\\
			\\
			s.t&\begin{cases}
				(y,z,F)\in \mathcal{S},\\
				\frac{M_i}{C_{i,\Cout}} - \sum_{j} F_{i,j} -  Ky_{i,\mathrm{pos}}\leq 0,\quad&\forall i \in I,\\
				\Cost_{i}(y_i,z_i,F_i,F_{-i}) \leq \KC{\STC_i(\alpha, y_{i,\mathrm{null}})},\quad&\forall i\in I.
			\end{cases}
		\end{array}\right.
	\end{equation}
	
	Since the feasible set of $\overline{\mathcal{A}}$ is bigger than $S_0$, we get that the optimal value of $\overline{\mathcal{A}}$, which we denote $\overline{Z}$, must be smaller than $Z_0$. That is, we have that
	\begin{equation}\label{eq:GapInequality}
		\overline{Z} \leq Z_0 \leq Z_{\varepsilon},\quad\forall \varepsilon>0.
	\end{equation}
	Unfortunately, the inequality $\overline{Z}\leq \overline{Z}_0$ might be strict. This is due to the fact that the feasible set of $\overline{\mathcal{A}}$ is not necessarily the closure of the feasible set of $\mathcal{A}_0$. However, $\overline{Z}$ gives us a reference point. Based on this, we propose the following methodology to solve problem $\mathcal{A}_0$:
	\begin{enumerate}
		\item 	Solve $\overline{\mathcal{A}}$, finding $(\bar{y},\bar{z},\bar{F})$ and $\overline{Z}$. If $(\bar{y},\bar{z},\bar{F})$ is feasible for $\mathcal{A}_0$, then this is the solution.
		\item  If not, then choose $\varepsilon>0$ as small as possible within the numerical precision. 
		\item Solve $\mathcal{A}_{\varepsilon}$, finding $(y_{\varepsilon},z_{\varepsilon}, F_{\varepsilon})$ and $Z_{\varepsilon}$ and keep  $(y_{\varepsilon},z_{\varepsilon}, F_{\varepsilon})$ as the best possible solution. 
	\end{enumerate}
	We can compare $Z_{\varepsilon}$ with $\overline{Z}$ to get an optimality gap: \DS{if $Z_{\varepsilon} - \bar{Z} = 0$, it means that $Z_{\varepsilon} = Z_0$ and thus the solution $(y_{\varepsilon},z_{\varepsilon}, F_{\varepsilon})$ is in fact a solution of $\mathcal{A}_0$, in view of Corollary \ref{cor:ExactSolution}.} If $Z_{\varepsilon}-\overline{Z}$ is not zero but it is small, we know that $Z_{\varepsilon}$ is close to $Z_0$ and so $(y_{\varepsilon},z_{\varepsilon}, F_{\varepsilon})$ is close to be a solution of $\mathcal{A}_0$.  However, if $Z_{\varepsilon}-\overline{Z}$ is ``large'', we can not get any conclusion. In such a case, we only can say that $(y_{\varepsilon},z_{\varepsilon}, F_{\varepsilon})$ is the best solution we could get.
	
	
	\section{Numerical experiments \label{sec: numerical results}}
	In this section, we present numerical examples of the Control-Input model. The objective of this model is to design and optimize the water exchange networks in the EIP to minimize the total freshwater consumption (equivalent to the total discharge of polluted water) of the enterprises and minimize the operating cost of enterprises, by considering that there is an exchange of water between enterprises and that each enterprise controls his input flux. The detailed model we use is described in Section \ref{sec:EIP}.  The optimization problem we solve is problem $\mathcal{A}_{0}$. Furthermore, to solve $\mathcal{A}_{0}$ we follow the methodology that presented in Sub-section \ref{sec:Num_Imp}.

	\subsection{Results and discussion of case study}\label{subsec:case_study}

	The study case we present consists of an EIP made up of 15 enterprises, each one including only one process and one type of contaminant. Data is partially inspired from \cite{Ramos2016, OlesenPolley1996}.  The data of 15 enterprises is given in Table \ref{Parameters}. Prices are shown in Table \ref{Associated costs}.  It is assumed that the EIP operates for one hour, that is, A = 1 h.

	\begin{table}[H]
		\centering
		\begin{tabular}{|c|c|c|c|}
			\hline 
			\mbox{Enterprise} $i$ &  $\mathrm{C}_{i,\mathrm{in}}$(ppm) & $\mathrm{C}_{i,\mathrm{out}}$(ppm) &$\mathrm{M}_{i}$(g/h) \\ 
			\hline  \hline
			1 & 30 & 100 & 7500 \\ 
			\hline 
			2& 0 &  200 & 6000 \\ 
			\hline 
			3& 50 & 100 & 5000 \\ 
			\hline
			4& 80 & 800 & 30000 \\ 
			\hline
			5& 400 & 800 & 4000 \\ 
			\hline
			6 & 20 & 100 & 2500 \\ 
			\hline 
			7& 50 &  100 & 2200 \\ 
			\hline 
			8& 80 & 400 & 5000 \\ 
			\hline
			9& 100 & 800 & 30000 \\ 
			\hline
			10& 400 & 1000 & 4000 \\ 
			\hline
			11 & 30 & 60 & 2000 \\ 
			\hline 
			12& 25 &  50 & 2000 \\ 
			\hline 
			13& 25 & 75 & 5000 \\ 
			\hline
			14& 50 & 800 & 30000 \\ 
			\hline
			15& 100 & 900 & 13000 \\ 
			\hline 
		\end{tabular}
		\caption{Enterprises' Parameters.}
		\label{Parameters} 
	\end{table}
	
	\begin{table}[H]
		\centering
		\begin{tabular}{cc}  
			\hline 
			Parameter  & Value ($\$$/tonne) \\ 
			\hline 
			$c$ & 0.13 \\ 
			$\beta$ & 0.22 \\ 
			$\gamma$ & 0.01\\ 
			\hline
		\end{tabular}
		\caption{Associated costs.} 
		\label{Associated costs} 
	\end{table}
	
	All simulations where implemented with \texttt{Julia v1.0.5} programming language \cite{bezanson2017julia}, using \texttt{Cplex} as solver.
	
	Now, we present the results of the simulations with the data given in Table \ref{Parameters} and Table \ref{Associated costs}. Our aim is to determine the optimal networks for the water exchange network in the EIP so that the total freshwater consumption (equivalent to the total discharge of polluted water) of the enterprises in the EIP is minimized and the participating enterprises in the EIP reduce at least 5\%. Therefore, we must solve optimization problem $\mathcal{A}_0$. To do so, 
	\begin{enumerate}
		\item First, we must solve problem $\overline{\mathcal{A}}$ to find $(\bar{y},\bar{z},\bar{F})$ and obtain $\overline{Z} = 332.46$ (T/h).
		\item Secondly, we verify whether  $(\bar{y},\bar{z},\bar{F})$ is feasible for problem $\mathcal{A}_0$ or not, if feasible $(\bar{y},\bar{z},\bar{F})$ will be an optimal solution for $\mathcal{A}_0$. Unfortunately, in this case $(\bar{y},\bar{z},\bar{F})$ is not feasible for $\mathcal{A}_0$.
		\item Finally, we will choose  $\varepsilon>0$ as small as possible, that is $\varepsilon = 10^{-6}$. Then we solve the problem $\mathcal{A}_{\varepsilon}$ to find $(y_{\varepsilon},z_{\varepsilon}, F_{\varepsilon})$ and obtain $Z_\varepsilon = 332.46$ (T/h).
	\end{enumerate}
	In this case study, we have $Z_\varepsilon = 332.46 = \overline{Z}$. Therefore, the solution $(y_{\varepsilon},z_{\varepsilon}, F_{\varepsilon})$ is exactly a solution of $\mathcal{A}_0$, according to Corollary \ref{cor:ExactSolution}.
	
	The optimal solution $(y_{\varepsilon},z_{\varepsilon}, F_{\varepsilon})$ provides an optimized configuration as shown in Figure \ref{figg:optimal_configuration}. This optimal network provides operating cost of each enterprise and total freshwater consumption that are lower than a stand-alone network as shown in Table \ref{table:ResultsEIP}.  The detailed results of fluxes in the optimal network are presented in Table \ref{table:values_of_fluex}.
	
	\begin{figure}[H]
		\fontsize{6pt}{6pt}\selectfont
		\centering
		\begin{tikzpicture}[->,>=stealth',shorten >=0pt,auto,node distance=1cm,
			semithick,square/.style={regular polygon,regular polygon sides=4}]
			
			\tikzstyle{every state}=[text=black,,font = \bfseries, fill opacity = 0.35, text opacity = 1]
			\node[state] 		 (A) [fill=gray]                   					{12};
			\node[state] 		 (B) [fill=gray,right of=A]       					{11};
			\node[state] 		 (C) [fill=gray,right of=B]       					{13};
			\node[state] 		 (D) [fill=gray,below of=B,node distance=0.6in]       					{3};
			\node[state]         (E) [left of=D] 						{9};	
			\node[state]         (F) [left of=E] 	{7};
			\node[state]         (G) [fill=gray,left of=F] 						{14};
			\node[state]         (H) [fill=gray,left of=G] 									{1};
			\node[state]         (I) [right of=D] 									{15};
			\node[state]         (J) [right of=I] 			{4};
			\node[state]         (K) [fill=gray,right of=J] 			{6};
			\node[state] 		 (L) [fill=gray, right of=K]     {8};
			\node[state]         (M) [pattern=north west lines, pattern color = gray,  right of=L] 	{2};
			\node[state]         (N) [pattern=north west lines, pattern color = gray,  right of=M] 			{5};
			\node[state]         (O) [pattern=north west lines, pattern color = gray,  left of=H,node distance=0.5in] 	{10};
			\node[state]         (P) [square,below of=D,node distance=0.6in]	{$0$};     
			
			\path (A) edge node{} (D) 
			(A) edge node{} (F)
			(B) edge node{} (D)
			(C) edge[bend right] node{} (A)
			(C) edge node{} (D)
			(C) edge node{} (J)
			(D) edge node {} (E)
			(F) edge node{} (E)	
			(F) edge[bend right] node{} (D)		
			(H) edge node{} (G)
			(D) edge node{} (I)
			(D) edge[bend right] node{} (J)
			(K) edge node{} (J)
			(K) edge node{} (L)
			(G) edge  node{} (P)
			(F) edge node{} (P)
			(E) edge node{} (P)
			(D) edge node{} (P)
			(I) edge node{} (P)
			(J) edge node{} (P)
			(K) edge node{} (P)
			(H) edge  [bend right=20] node{} (P)
			(L) edge [bend left=20] node{} (P)
			(M) edge [bend left=20] node{} (P)
			(N) edge [bend left=20] node{} (P)
			(O) edge [bend right=20] node{} (P);
		\end{tikzpicture}
		\caption{The configuration in the case $\alpha = 0.95$ and $\varepsilon = 10^{-6}$. Gray nodes are consuming strictly positive freshwater. Dashed nodes are operating in stand-alone mode.}
		\label{figg:optimal_configuration}
	\end{figure}
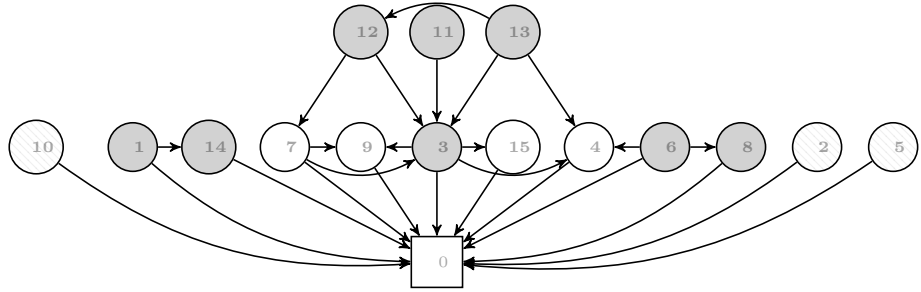
	
	\begin{table}[H] 
		\centering 
		\begin{tabular}{|c|c|c||c|c|c|}
			\hline
			Enterprise& Freshwater  & Freshwater & $\Cost_{i}$ Stand-Alone & $\Cost_{i}$ in EIP & \% Reduction\\
			& Stand-Alone (T/h) & in EIP (T/h)& (MMUSD/hour) & (MMUSD/hour) & in $\Cost_{i}$ \\ \hline
			1&75.00&75.00&26.25&22.05 & 16.00\\ \hline 
			2*&30.00&30.00&10.50&10.50 & 0.00 \\ \hline
			3&50.00&17.00&17.50&14.54 &  16.91\\ \hline
			4&37.50&0.00&13.13&9.58 & 26.98 \\ \hline
			5*&5.00&5.00&1.75&1.75& 0.00 \\ \hline
			6&25.00&25.00&8.75&5.61 & 35.86 \\ \hline
			7&22.00&0.00&7.70&5.50 & 28.57 \\ \hline
			8&12.50&3.13&4.38&3.97& 9.29 \\ \hline
			9&37.50&0.00&13.13&9.86& 24.90 \\ \hline
			10*&4.00&4.00&1.40&1.40&0.00  \\ \hline
			11&33.33&33.33&11.67&4.67& 60.00 \\ \hline
			12&40.00&53.33&14.00&8.00& 42.86 \\ \hline
			13&66.67&66.67&23.33&9.33& 60.00 \\ \hline
			14&37.50&20.00&13.13&11.60& 11.62 \\ \hline
			15&14.44&0.00&5.06&3.74& 26.07 \\ \hline
			\hline
			Total& 490.44	&332.46&	171.66	&122.10& 28.87\\ \hline		
		\end{tabular} 
		\caption{Summary of results of the EIP. Marked enterprises (*) are left outside the park, operating stand-alone.}
		\label{table:ResultsEIP}
	\end{table} 
	For this case study, when the enterprises operate stand alone, then the entire system consumes a total 490.44 (T/h) of freshwater and, of course, generates 490.44 (T/h) of wastewater. The optimal design obtain with SLMF approach allows to reduce its freshwater requirement to 332.46 (T/h), and thus generates 332.46 (T/h) of wastewater, which is equivalent to a reduction of 32.21 \%. Furthermore, the water demand of enterprises 4, 7, 9, and 15 are entirely supplied by other enterprises.  
	
	In addition, each participating enterprise has a cost reduction of at least 5\%,  as promised by the contract. Indeed, enterprises 11 and 13 have the highest reduction in operating costs up to 60\% compared to stand-alone configuration while enterprise 8 has the lowest operating cost reduction of only 9.29\% compared to stand-alone configuration. On the other hand, the EIP authority cannot guarantee that the enterprises 2, 5, and 10 reduce their operating costs by at least 5\%, thus these enterprises work stand-alone. Total operating cost is decreased regarding the stand-alone case, as expected, from 171.66 (\$/h) to 122.86 (\$/h), which means a decrease of 28.87\%.
	
	\subsection{Sensitivity analysis for parameter $\varepsilon$}\label{subsec:varepsilon}
	In this part, we present an example to show that the optimal value of Problem $\mathcal{A}_{\varepsilon}$ and the optimal value of Problem $\overline{\mathcal{A}}$ can be different, i.e., there is a gap between $Z_{\varepsilon}$ and $\overline{Z}$. To do so, we consider an EIP consisting of 10 enterprises presented in Table \ref{Parameters_Sensivity}. The prices are the same as before, presented in Table \ref{Associated costs}, and we fix the parameter $\alpha = 0.95$.
	
	\begin{table}[H]
		\centering
		\begin{tabular}{|c|c|c|c|}
			\hline 
			\mbox{Enterprise} $i$ &  $\mathrm{C}_{i,\mathrm{in}}$(ppm) & $\mathrm{C}_{i,\mathrm{out}}$(ppm) &$\mathrm{M}_{i}$(g/h) \\ 
			\hline  \hline
			1 & 50 & 100 & 5000 \\ 
			\hline 
			2& 80 &  800 & 30000 \\ 
			\hline 
			3& 400 & 800 & 4000 \\ 
			\hline
			4& 20 & 100 & 2500 \\ 
			\hline
			5& 80 & 400 & 5000 \\ 
			\hline
			6 & 100 & 800 & 30000 \\ 
			\hline 
			7& 30 &  60 & 2000 \\ 
			\hline 
			8& 25 & 50 & 2000 \\ 
			\hline
			9& 25 & 75 & 5000 \\ 
			\hline
			10& 50 & 800 & 30000 \\  
			\hline 
		\end{tabular}
		\caption{Enterprises' Parameters.}
		\label{Parameters_Sensivity} 
	\end{table}
	With data of 10 enterprises is presented in Table \ref{Parameters_Sensivity}, our aim is to solve the Problem $\overline{\mathcal{A}}$ and $\mathcal{A}_{\varepsilon}$. Indeed,
	\begin{itemize}
		\item  First, we solve problem $\overline{\mathcal{A}}$ to find $(\bar{y},\bar{z},\bar{F})$ and obtain $\overline{Z} = 201.46$ (T/h). 
		\item Then, we will choose  $\varepsilon>0$ as small as possible, that is $\varepsilon = 10^{-6}$. Then we solve the problem $\mathcal{A}_{\varepsilon}$ to find $(y_{\varepsilon},z_{\varepsilon}, F_{\varepsilon})$ and obtain $Z_\varepsilon = 201.48$ (T/h).
	\end{itemize} 
	In this case study, we have $Z_\varepsilon - \overline{Z} = 0.02$ (T/h). Thus, the gap between $Z_\varepsilon$ and $\overline{Z}$ is small, then we conclude that $(y_{\varepsilon},z_{\varepsilon}, F_{\varepsilon})$ is an approximate solution of $\mathcal{A}_0$. 
	\subsection{Comparison between the control input model and the blind input model}\label{subsec:CI_vs_BI}
	One of the goals of the control input model is to propose an alternative approach to  the blind input model developed in \cite{SALAS2020107053}. It is therefore important to compare the optimal results of both approaches. The criteria we used for comparing the two models are as follows:  first,  the ability to reduce freshwater consumption and thus wastewater; second, the number of enterprises involved into the optimal EIP and last, the ability to reduce the operating costs of each enterprise. These criteria will be discussed step by step.

	First of all and for both models, the total of freshwater consumption and wastewater discharge decreases when $\alpha$ increases as shown in Figure \ref{fig:Sensitivitydischarge_BIandCI}. Note that only the wastewater discharge curve is included below because, of course, both are the same. But the control input model allows an strong reduction of freshwater consumption and wastewater discharge compare to the blind input model. One of the reasons why is that, as it will be seen in Figure \ref{fig:SensitivitySTA_BIandCI} for the same relative cost reduction $\alpha$ there are always more enterprises participating in the EIP for control input model than with the blind input model.  In the optimized networks, the control input model achieves a minimum total of freshwater consumption as well as wastewater discharge is 332.46 (T/h) corresponding to 32.21\% with respect to the stand-alone configuration, while the blind input model achieves a minimum total freshwater consumption as well as wastewater discharge is 340.13 (T/h) corresponding to 30.65\% with respect to the stand-alone configuration. 
	
	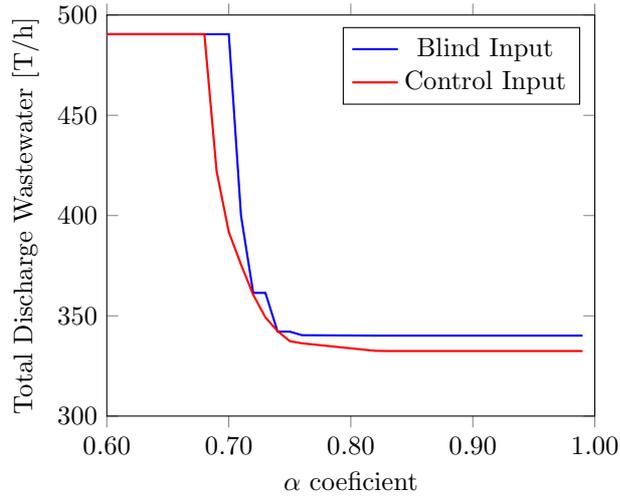
\begin{figure}[H]
		\centering
		\begin{tikzpicture}
			\pgfplotsset{
				xmin=0.6, xmax=1, legend pos = north east
			}
			
			\begin{axis}[
				ymin=300, ymax=500,
				xlabel=$\alpha$ coeficient,
				ylabel={Total Discharge Wastewater [T/h]} ,
				xtick align = outside,
				x tick label style={
					/pgf/number format/.cd,
					fixed,
					fixed zerofill,
					precision=2,
					/tikz/.cd
				}
				]
				\addplot[color=blue, thick] table [x index=0, y index=1, col sep=space]  {Plot_Latex_alpha_Fw_STA_Cost_BI.txt};
				\addplot[color=red, thick] table [x index=0, y index=1, col sep=space]  {Plot_Latex_alpha_Fw_STA_Cost_CI.txt};  
				\label{plot_discharge_BIandCI} 
				\addlegendentry{Blind Input}
				\addlegendentry{Control Input} 
			\end{axis}
			
		\end{tikzpicture}
		\caption{Sensitivity analysis for $\alpha\in [0.60,0.99]$ and $\varepsilon = 10^{-6}$. Total discharge wastewater.}\label{fig:Sensitivitydischarge_BIandCI}		
	\end{figure}

	The number of enterprises which operate stand-alone for both models is shown in Figure  \ref{fig:SensitivitySTA_BIandCI}. Roughly speaking, for $\alpha \in [0.60, 0.99]$, the number of enterprises operating stand-alone with control inputs is almost less than that of enterprises with blind inputs, except for a few special cases. Moreover, with $\alpha \in [0.69, 0.70]$, the control input model shows its clear superiority on the blind input model. Indeed with the control inputs the designer can build a park for which not only a wastewater discharge reduction is achieved compare to blind input results but also the designer can attract the exigent enterprises by guaranteeing a relative gain of more than 30\% while with such a rate blind input model only proposes full stand alone situation. Finally, for $\alpha \leq 0.68$, the optimal solution is the stand-alone configuration for both models, thus $0.68$ playing the role of a  threshold value for the relative gain.
	
	\begin{minipage}[c]{.5\linewidth}
		\begin{center}
			\begin{figure}[H]
				\begin{tikzpicture}
					\pgfplotsset{
						xmin=0.6, xmax=1, legend pos =  north east
					}
					
					\begin{axis}[
						ymin=0, ymax=16,
						xlabel=$\alpha$ coeficient,
						ylabel={Number of Stant-alone agents} ,
						xtick align = outside,
						x tick label style={
							/pgf/number format/.cd,
							fixed,
							fixed zerofill,
							precision=2,
							/tikz/.cd
						}
						]
						\addplot[color=blue, thick] table [x index=0, y index=2, col sep=space]  {Plot_Latex_alpha_Fw_STA_Cost_BI.txt};
						\addplot[color=red, thick] table [x index=0, y index=2, col sep=space]  {Plot_Latex_alpha_Fw_STA_Cost_CI.txt};
						\label{plot_STA_BIandCI}
						\addlegendentry{Blind Input}
						\addlegendentry{Control Input} 
					\end{axis}
					
				\end{tikzpicture}
				\caption{Sensitivity analysis for $\alpha\in [0.60,0.99]$ and $\varepsilon = 10^{-6}$. Number of stand-alone enterprises in the park.}\label{fig:SensitivitySTA_BIandCI}
			\end{figure}
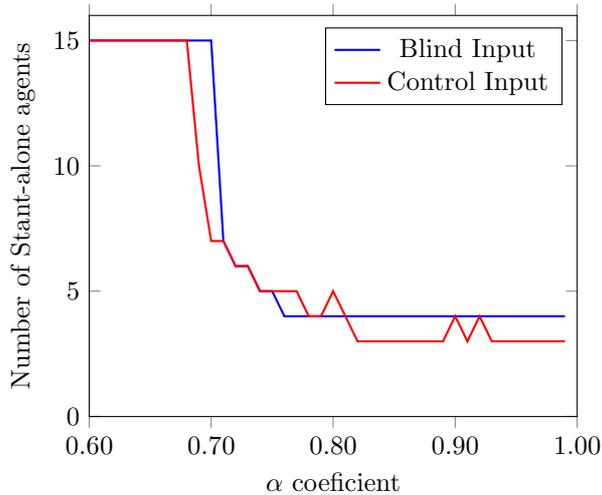 
		\end{center}
	\end{minipage} 
	$~~~~~$
	\begin{minipage}[c]{.5\linewidth}
		\begin{center}
			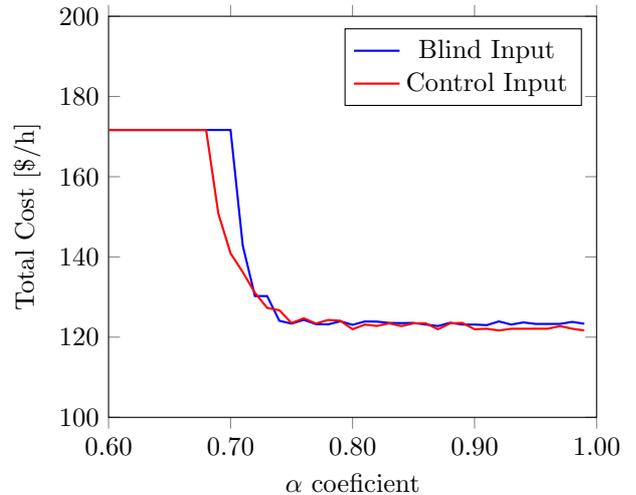
\begin{figure}[H]
				\begin{tikzpicture}
					\pgfplotsset{
						xmin=0.6, xmax=1, legend pos = north east
					}
					
					\begin{axis}[
						ymin=100, ymax=200,
						xlabel=$\alpha$ coeficient,
						ylabel={Total Cost [\$/h]} ,
						xtick align = outside,
						x tick label style={
							/pgf/number format/.cd,
							fixed,
							fixed zerofill,
							precision=2,
							/tikz/.cd
						}
						]
						\addplot[color=blue, thick] table [x index=0, y index=3, col sep=space]  {Plot_Latex_alpha_Fw_STA_Cost_BI.txt};
						\addplot[color=red, thick] table [x index=0, y index=3, col sep=space]  {Plot_Latex_alpha_Fw_STA_Cost_CI.txt};
						\label{plot_Cost_BIandCI}
						\addlegendentry{Blind Input}
						\addlegendentry{Control Input} 
					\end{axis}
					
				\end{tikzpicture}
				\caption{Sensitivity analysis for $\alpha\in [0.60,0.99]$ and $\Coef = 1$. Total operating cost in the park.}\label{fig:Sensitivity_Cost_BIandCI}
			\end{figure}
		\end{center}
	\end{minipage}
	As observed from Figure \ref{fig:Sensitivity_Cost_BIandCI}, the total operating cost in the EIP with control inputs is always less than the one with blind inputs. The behavior of the total cost with both models is declining evenly. In the optimized networks, the control input model achieves a minimum total operating cost is 121.66 (\$/h) corresponding to 29.13\% with respect to the stand-alone configuration, while the blind input model achieves a minimum total operating cost is 122.77 (\$/h), which is equivalent to a reduction of 28.48 \%..

	\section{Conclusion and perspectives}\label{sec:conclusion}
	
	In this work, we address the design and optimization of water exchange networks in eco-industrial parks 440 by formulating and solving Single-Leader-Multi-Follower games. Using some linear structure of the costs functions $\mathrm{Cost}_i(\cdot)$ of each enterprise, the control-input model can be solved by finding the solution of an auxiliary single-level mixed-integer problem. This auxiliary problem can be approximated by a family of usual single-level mixed-integer linear problems, which can be treated by common commercial solvers, thus allowing to tackle larger EIP networks. In our model, we consider that each participating enterprise 445 can control the amount of water coming from other enterprises. In another word, when participating in the network, each enterprise can control his input flux, which is more realistic than other models in the literature \cite{SALAS2020107053,Ramos2016}. The results show that the proposed approach is efficient. Indeed, the total discharge wastewater (=the total freshwater consumption) and total network cost in the optimal network have been reduced by 32.21 \% and 28.87\% respectively in which the designer ensures that each participating
	450 enterprise has a cost reduction of at least 5\%.
	Nevertheless our approach use the assumptions that each enterprise only manages one industrial process and that no regeneration units are used. This represents a limitation of our work, in particular concerning the second hypothesis since as it has been emphasized in \cite{SALAS2020107053} and \cite{Ramos2016}, the introduction of regeneration units usually allows some significant improvements of the results. It would be thus interesting, but out of the
	455 scope of this work, to analyse to which extend it could be possible to use our calculus and reformulation for an EIP in which either enterprises handle more than one processes or regeneration units are involved.

	\section*{Acknowledgments}
	The first and third authors is funded by MathAMSud project SOCCAE ``Stochastic Optimization and Chance Constraints with Applications to Energy" 2020-2021. The second author would like to thank the financial support provided by the ``R\'egion Occitanie" and the European Union (through the Fonds europ\'een de d\'eveloppement r\'egional (FEDER)). The third author was partially funded by Centro de Modelamiento Matem\'atico (CMM), FB210005, BASAL funds for centers of excellence from ANID-Chile.

	\appendix
	\section{Network Fluxes of Simulations} 
	The values of the flux corresponding to the case in sections \ref{sec: numerical results} is given in Tables \ref{table:values_of_fluex}. The entrance $(i,j)$ corresponds to the flux sent from enterprise $i$ to enterprise $j$.
	\begin{table}[hb] 
		\centering
		\resizebox{\textwidth}{!}{
			\begin{tabular}{|c|c|c|c|c|c|c|c|c|c|c|c|c|c|c|c|c|}
				\hline 
				Enterprise& 1 & 2 & 3 & 4 & 5 & 6 & 7 & 8 & 9 & 10 & 11 & 12 & 13 & 14 &15 &Sink  \\ 
				\hline 
				1& 0.00 & 0.00 & 0.00 & 0.00 & 0.00 & 0.00 & 0.00 & 0.00 & 0.00 & 0.00 & 0.00 & 0.00 & 0.00 & 20.00 & 0.00 & 55.00 \\ 
				\hline 
				2& 0.00 & 0.00 & 0.00 & 0.00 & 0.00 & 0.00 & 0.00 & 0.00 & 0.00 & 0.00 & 0.00 & 0.00 & 0.00 & 0.00 & 0.00 & 30.00 \\ 
				\hline 
				3& 0.00 & 0.00 & 0.00 & 5.89 & 0.00 & 0.00 & 0.00 & 0.00 & 27.86 & 0.00 & 0.00 & 0.00 & 0.00 & 0.00 & 16.25 & 50.00 \\ 
				\hline 
				4& 0.00 & 0.00 & 0.00 & 0.00 & 0.00 & 0.00 & 0.00 & 0.00 & 0.00 & 0.00 & 0.00 & 0.00 & 0.00 & 0.00 & 0.00 &  41.67\\ 
				\hline 
				5& 0.00 & 0.00 & 0.00 & 0.00 & 0.00 & 0.00 & 0.00 & 0.00 & 0.00 & 0.00 & 0.00 & 0.00 & 0.00 & 0.00 & 0.00 & 5.0 \\ 
				\hline 
				6& 0.00 & 0.00 & 0.00 & 2.44 & 0.00 & 0.00 & 0.00 & 12.50 & 0.00 & 0.00 & 0.00 & 0.00 & 0.00 & 0.00 & 0.00 & 10.06 \\ 
				\hline 
				7& 0.00 & 0.00 & 7.00 & 0.00 & 0.00 & 0.00 & 0.00 & 0.00 & 15.00 & 0.00 & 0.00 & 0.00 & 0.00 & 0.00 & 0.00 & 22.00 \\ 
				\hline 
				8& 0.00 & 0.00 & 0.00 & 0.00 & 0.00 & 0.00 & 0.00 & 0.00 & 0.00 & 0.00 & 0.00 & 0.00 & 0.00 & 0.00 & 0.00 & 15.63 \\ 
				\hline 
				9& 0.00 & 0.00 & 0.00 & 0.00 & 0.00 & 0.00 & 0.00 & 0.00 & 0.00 & 0.00 & 0.00 & 0.00 & 0.00 & 0.00 & 0.00 &  42.86\\ 
				\hline 
				10& 0.00 & 0.00 & 0.00 & 0.00 & 0.00 & 0.00 & 0.00 & 0.00 & 0.00 & 0.00 & 0.00 & 0.00 & 0.00 & 0.00 & 0.00 & 4.00 \\ 
				\hline 
				11& 0.00 & 0.00 & 33.33 & 0.00 & 0.00 & 0.00 & 0.00 & 0.00 & 0.00 & 0.00 & 0.00 & 0.00 & 0.00 & 0.00 & 0.00 &0.00  \\ 
				\hline 
				12& 0.00 & 0.00 & 36.00 & 0.00 & 0.00 & 0.00 & 44.00 & 0.00 & 0.00 & 0.00 & 0.00 & 0.00 & 0.00 & 0.00 & 0.00 &0.00  \\ 
				\hline 
				13& 0.00 & 0.00 & 6.67 & 33.33 & 0.00 & 0.00 & 0.00 & 0.00 & 0.00 & 0.00 & 0.00 & 26.67 & 0.00 & 0.00 & 0.00 & 0.00 \\ 
				\hline 
				14& 0.00 & 0.00 & 0.00 & 0.00 & 0.00 & 0.00 & 0.00 & 0.00 & 0.00 & 0.00 & 0.00 & 0.00 & 0.00 & 0.00 & 0.00 & 40.00 \\
				\hline
				15& 0.00 & 0.00 & 0.00 & 0.00 & 0.00 & 0.00 & 0.00 & 0.00 & 0.00 & 0.00 & 0.00 & 0.00 & 0.00 & 0.00 & 0.00 & 16.25 \\ 
				\hline 
			\end{tabular} 
		}
		\caption{The values of the flux in the optimal networks, $\alpha = 0.95$ and $\varepsilon = 10^{-6}$.}
		\label{table:values_of_fluex}
	\end{table}
	
	\bibliographystyle{plain}
	\bibliography{Bib_ControlInput}


\end{document}